\crefname{section}{§}{§§}
\Crefname{section}{§}{§§}
\newtheorem{theorem}{Theorem}[section]
\newtheorem{lemma}[theorem]{Lemma}
\newtheorem{proposition}[theorem]{Proposition}
\newtheorem{corollary}[theorem]{Corollary}
\newtheorem{que}[theorem]{Question}
\theoremstyle{definition}
\newtheorem{definition}[theorem]{Definition}
\theoremstyle{remark}
\newtheorem{remark}[theorem]{Remark}
\newtheorem{notation}[theorem]{Notation}
\numberwithin{equation}{section}
\DeclareMathAlphabet{\mathpzc}{OT1}{pzc}{m}{it}
\DeclareMathOperator{\sheafHom}{\mathscr{H}\text{\kern -3pt {\calligra\large om}}\,}
\begin{document}

\title{Galois covers of singular curves in positive characteristics}
\author{Soumyadip Das}
\address{School of Mathematics,
Tata Institute of Fundamental Research, Mumbai, Maharastra 400005.}
\email{dass@tifr.math.res.in}

\subjclass[2020]{ 14H30, 14G17 (Primary) 14G32, 14H20 (Secondary)}

\keywords{\'{E}tale Fundamental Group, Tame Fundamental Group, Lifting Problem for Seminormal Curves}

\begin{abstract}
We study the \'{e}tale fundamental groups of singular reduced connected curves defined over an algebraically closed field of arbitrary prime characteristic. It is shown that when the curve is projective, the \'{e}tale fundamental group is a free product of the \'{e}tale fundamental group of its normalization with a free finitely generated profinite group whose rank is well determined. As a consequence of this result and the known results for the smooth case, necessary conditions are given for a finite group to appear as a quotient of the \'{e}tale fundamental group. Next, we provide similar results for an affine integral curve $U$. We provide a complete group theoretic classification on which finite groups occur as the Galois groups for Galois \'{e}tale connected covers of $U$. In fact, when $U$ is a seminormal curve embedded in a connected seminormal curve $X$ such that $X - U$ consists of smooth points, the tame fundamental group $\pi_1^t(U \subset X)$ is shown to be isomorphic to a free product of the tame fundamental group of the normalization of $U$ with a free finitely generated profinite group whose rank is known. An analogue of the Inertia Conjecture is also posed for certain singular curves.
\end{abstract}

\maketitle

\section{Introduction}
The main objective of this article to relate the \'{e}tale fundamental group of a reduced (singular) curve defined over an algebraically closed field of positive characteristic with the \'{e}tale fundamental group of its normalization.

Let $p$ be a prime number and $k$ be an algebraically closed field of characteristic $p$. The \'{e}tale fundamental group $\pi_1(X)$ of a smooth projective connected $k$-curve is a finitely generated profinite group, but not much is known about the structure of the group $\pi_1(X)$ itself. As a finitely generated profinite group, its structure is completely determined by its finite quotients. Due to works of Grothendieck, Shafarevich Hasse--Witt, Nakajima, Stevenson and others (see \cite[Section 7]{survey_paper}), we know some necessary conditions on these finite quotients. If $U \subset X$ is the affine curve obtained by removing $r$ closed points of $X$, the \'{e}tale fundamental group $\pi_1(U)$ is no longer finitely generated (in contrast to what happens over $\mathbb{C}$). In this case, we know all the finite quotients by Abhyankar's Conjecture on Affine Curves (\cite[Conjecture~3.2]{survey_paper}, which is now a Theorem due to Serre, Raynaud and Harbater; \cite[Section~3.3]{survey_paper}). More precisely, a finite group $G$ occurs as a quotient of $\pi_1(U)$ if and only if $G/p(G)$ is generated by at most $2 g(X) + r -1$ elements (here $g(X)$ is the genus of $X$, and $p(G)$ is the normal subgroup generated by the Sylow $p$-subgroups of $G$). In general, this group theoretic description does not determine the structure of the group $\pi_1(U)$ as we do not know how they fit into the inverse system that defines the profinite group $\pi_1(U)$.

In this article, we first consider $\pi_1(X)$ for an arbitrary reduced projective connected $k$-curve $X$ that is not necessarily irreducible and may contain singular points. Let $\nu \colon X^\nu \longrightarrow X$ be the normalization map. Then we have the following structure theorem for $\pi_1(X)$ in terms of $\pi_1(X^\nu)$ and the number of points in the fibres of the normalization map $\nu$.

\begin{theorem}[{Theorem~\ref{thm_main_projective}}]\label{intro_thm_proj}
Let $k$ be an algebraically closed field of arbitrary characteristic. Let $X$ be a connected projective $k$-curve having $n$ irreducible components. Consider the normalization map $\nu \colon X^\nu \longrightarrow X$. Set
$$\delta \coloneqq 1 - n + \sum_{x \in X} \left( |\nu^{-1}(x)| -1 \right).$$
We have an isomorphism of finitely generated profinite groups:
$$\pi_1(X) = \pi_1(X^\nu) \ast \widehat{F_\delta}$$
where $\widehat{F_\delta}$ is the profinite completion of a free group $F_\delta$ on $\delta$ generators.
\end{theorem}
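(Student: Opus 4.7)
The plan is to reduce to the seminormal case via the universal homeomorphism $X^{\mathrm{sn}} \to X$, then realize $X$ as an iterated pushout of point identifications starting from $X^\nu$, and compute the effect on $\pi_1$ at each step via \'{e}tale descent.

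The first reduction is clean: the seminormalization morphism $X^{\mathrm{sn}} \to X$ is a finite universal homeomorphism, so it induces an equivalence of \'{e}tale sites and hence $\pi_1(X) \cong \pi_1(X^{\mathrm{sn}})$. Since $X$ and $X^{\mathrm{sn}}$ have the same normalization and the same numerical data $\{|\nu^{-1}(x)|\}_x$, the invariant $\delta$ agrees, and we may assume $X$ is seminormal. Each singular point $x$ is then locally $m_x := |\nu^{-1}(x)|$ smooth branches meeting transversely, and one has a cocartesian square with $\nu^{-1}(X_{\mathrm{sing}})$ in the upper left, $X_{\mathrm{sing}}$ in the upper right, $X^\nu$ in the lower left, and $X$ in the lower right. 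Descent along such pushouts of schemes (Ferrand, SGA~1) identifies the category of finite \'{e}tale covers of $X$ with the category of finite \'{e}tale covers $Y^\nu \to X^\nu$ equipped with, for each singular $x$, a compatible identification of the fibers over the preimages $y_1, \ldots, y_{m_x}$.

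I would then construct $X$ inductively from $X^\nu$ by performing $\sum_x (m_x - 1)$ elementary identifications, gluing two points at a time. The descent description specialized to a single identification tells us that a Galois cover of the new curve with group $G$ is the same as a Galois cover of the previous curve $Y$ with group $G$ together with a $G$-equivariant bijection between the two fibers, equivalently a single element of $G$. Two cases arise. If the two glued points lie in different connected components of $Y$, the gluing merges those components and their fundamental groups combine into a larger free product without any new generators. If the two points lie in the same component of $Y$, the gluing element may be chosen arbitrarily in $G$; passing to the inverse limit over all finite Galois covers this yields a new free $\widehat{\mathbb{Z}}$ factor, $\pi_1(Y') \cong \pi_1(Y) \ast \widehat{\mathbb{Z}}$. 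Ordering the identifications so that the first $n-1$ merge components and the remaining $\delta = \sum_x(m_x-1) - (n-1)$ create loops, iteration yields $\pi_1(X) \cong \pi_1(X^\nu) \ast \widehat{F_\delta}$, with $\pi_1(X^\nu)$ interpreted as the free profinite product of the fundamental groups of its $n$ connected components.

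I expect the main obstacle to be the rigorous verification of the free-product assertion in the profinite setting: one must show that arbitrary $G$-equivariant gluing choices, as one ranges over all finite Galois covers, assemble into an honest free $\widehat{\mathbb{Z}}$ factor independent of $\pi_1(Y)$. This likely requires a direct argument using the universal property of profinite free products together with the pro-system of descent data coming from Step~2. The remaining ingredients -- the universal-homeomorphism reduction, Ferrand's pushout description for seminormal curves, and the combinatorial counting of $\delta$ -- are either standard or straightforward.
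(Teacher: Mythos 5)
Your strategy is sound, and it is genuinely different from the paper's. The paper reduces to the seminormal case exactly as you do, but then lifts $X$ to characteristic $0$ (this is where seminormality is really exploited, via Kaya's pinching result in Proposition~\ref{prop_lift_seminormal}), computes $\pi_1(X_{\mathbb{C}})$ topologically via the Riemann Existence Theorem, and deduces a surjection $\pi_1(X^\nu)\ast\widehat{F_\delta}\twoheadrightarrow\pi_1(X)$ from the surjectivity of the specialization map; the reverse surjection is produced by explicitly building connected Galois covers of $X$ out of induced covers glued along fibres (Propositions~\ref{prop_main_1} and~\ref{prop_main_2}), and the isomorphism follows because finitely generated profinite groups admitting surjections in both directions are isomorphic. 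Your route stays entirely in characteristic $p$ and replaces the specialization apparatus by the descent equivalence for the pinching pushout. The gap you flag is real but closable without a new idea: once $\mathrm{FEt}_{Y'}$ is identified with the category of pairs $\left(Z\in\mathrm{FEt}_Y,\ \sigma\colon F_{y_1}(Z)\xrightarrow{\sim}F_{y_2}(Z)\right)$, choose isomorphisms of fibre functors $F_{y_1}\cong F\cong F_{y_2}$ (possible since $Y$ is connected); the category becomes that of finite sets carrying a continuous $\pi_1(Y)$-action together with an arbitrary extra permutation, i.e.\ finite continuous $\pi_1(Y)\ast\widehat{\mathbb{Z}}$-sets, and the Galois formalism then gives $\pi_1(Y')\cong\pi_1(Y)\ast\widehat{\mathbb{Z}}$ on the nose, with no limit over finite quotients and no appeal to finite generation. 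The step you under-weight is the descent equivalence itself: full faithfulness is formal, but essential surjectivity requires checking that the scheme obtained by pinching the fibres of $Z$ is actually \'{e}tale over $Y'$ at the new singular point; this is precisely the paper's Lemma~\ref{lem_cover_by_identification} (an explicit computation of the local ring as $k+J(\cdots)$), and you should either reproduce that computation or cite a pinching statement for finite \'{e}tale covers rather than Ferrand's theorem alone, which handles quasi-coherent modules and affine morphisms but not \'{e}taleness at the glued point. What the paper's longer route buys is the surjective specialization map from the complex fibre and the ramification-preserving cover constructions, both reused later (Corollary~\ref{cor_nec}, the affine and tame results); what your route buys is independence from characteristic-$0$ lifting and a conclusion at the level of Galois categories rather than just of groups.
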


Note that the summation in the above definition of $\delta$ can be taken over the singular points of $X$. To prove the result, we first reduce to the case where $X$ is a seminormal curve (Lemma~\ref{lem_same_pi_for_semi-normal}). As a consequence of \cite{Kaya}, the lifting problem for coves of $X$ is solvable (Proposition~\ref{prop_lift_seminormal}). It follows that there is a complex projective connected seminormal curve $X_{\mathbb{C}}$ satisfying: (1) $X_{\mathbb{C}}$ has $n$ irreducible components; (2) The above defined number $\delta$ for $X_{\mathbb{C}}$ is the same as $\delta$ for $X$; (3) (see \eqref{eq_surj}) there is a surjective specialization map $\text{\rm sp} \colon \pi_1(X_{\mathbb{C}}) \twoheadrightarrow \pi_1(X)$. We have a complete description of $\pi_1(X_{\mathbb{C}})$ by the Riemann Existence Theorem (\cite[Exp XII; Corollary~5.2]{SGA1}) and a topological argument. This shows that $\pi_1(X)$ is a finitely generated profinite group. Further using the surjectivity of the above map $\text{\rm sp}$ and the specialization map in the smooth case, we establish that $\pi_1(X)$ is a quotient of $\pi_1(X^\nu) \ast \widehat{F_\delta}$. We then prove that there is also a surjection $\pi_1(X) \twoheadrightarrow \pi_1(X^\nu) \ast \widehat{F_\delta}$. To see this, we first note that (Remark~\ref{rmk_factorization}) $X$ is obtained from $X^\nu$ by identifying two points at a time. Then the result follows by inductively applying Proposition~\ref{prop_main_1} and Proposition~\ref{prop_main_2}. Roughly, these propositions allow us to construct connected Galois covers of $X$ starting from connected Galois covers of $X^{\nu}$. Here we suitably identify points of the induced covers to obtain the final cover. This is managed by use of Lemma~\ref{lem_cover_by_identification}. Let us mention that the above propositions preserve the ramification behavior of the covers (Remark~\ref{rmk_preserving_inertia}), and up to some finitely many choices involving the coset representatives and finitely many points, this method canonically constructs all the possible connected Galois covers of $X$ from those of $X^\nu$ (see Remark~\ref{rmk_categorical}).

As an application of Theorem~\ref{intro_thm_proj} together with the knowledge from the smooth case, we easily establish some important necessary conditions for a finite group $G$ to be realized as a quotient of the group $\pi_1(X)$ (see Corollary~\ref{cor_nec}).

Next, we consider the case when $U$ is a connected affine singular $k$-curve. For simplicity, we also assume that $U$ is irreducible. Our first main result gives a a complete group theoretic classification for finite (continuous) quotients of $\pi_1(U)$, analogous to the smooth case. Recall that for a finite group $G$, (necessarily normal, quasi $p$-subgroup) $p(G)$ is the subgroup generated by the Sylow $p$-subgroups of $G$.

\begin{theorem}[{Theorem~\ref{thm_main_affine}}]\label{intro_thm_affine}
Let $k$ be any algebraically closed field of arbitrary characteristic and $U$ be an integral affine $k$-curve. Let $X^\nu$ be the unique (up to isomorphism) smooth projective connected $k$-curve with function field $k(X^\nu) = k(U)$. Let $U^\nu \subset X^\nu$ be the maximal affine open subset together with a finite surjective morphism $\nu \colon U^{\nu} \longrightarrow U$ that is an isomorphism over the smooth points of $U$. Suppose that $X^\nu$ has genus $g$, \, $r = |X^\nu - U^\nu|$ and set
$$\delta \coloneqq \sum_{u \in U} \left( |\nu^{-1}(u)| -1 \right).$$

A finite group $G$ occurs as a (continuous) quotient of the \'{e}tale fundamental group $\pi_1(U)$ if and only if $G/p(G)$ is generated by at most $2g+r-1+\delta$ elements.
\end{theorem}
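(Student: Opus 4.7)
My strategy is to combine Abhyankar's theorem on the smooth affine curve $U^\nu$ with the identification procedure for singular curves developed in the proof of Theorem~\ref{intro_thm_proj}. I first compactify $U$ to a projective integral $k$-curve $X$ by adjoining the $r$ points of $X^\nu - U^\nu$; then the normalization of $X$ is $X^\nu$, the complement $X - U$ consists of $r$ smooth points of $X$, and the invariant $\delta$ for $X$ agrees with that of $U$. Under this setup, connected étale Galois covers of $U$ correspond to connected Galois covers of $X$ that are étale over $U$ and at most ramified over $X - U$.

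\textbf{Necessity.} Given a connected étale Galois $G$-cover $V \to U$, I would pass to the quotient $V/p(G) \to U$, a connected étale Galois cover with prime-to-$p$ group $G/p(G)$. Extending by normalization and patching along $X - U$ yields a Galois cover $\bar V \to X$ étale over $U$ and at worst tamely ramified at the $r$ boundary points. A tame refinement of Proposition~\ref{prop_lift_seminormal} (following~\cite{Kaya}) then produces a characteristic-zero lift $\bar V_{\mathbb{C}} \to X_{\mathbb{C}}$, with $X_{\mathbb{C}}$ of the same combinatorial type as $X$. Restriction to $U_{\mathbb{C}} \coloneqq X_{\mathbb{C}} - (r\text{ points})$ gives an étale Galois $G/p(G)$-cover of $U_{\mathbb{C}}$. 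Topologically, $U_{\mathbb{C}}$ is obtained from the smooth affine $U_{\mathbb{C}}^\nu$ (a genus-$g$ Riemann surface with $r$ punctures, whose topological fundamental group is free of rank $2g + r - 1$) by collapsing each fibre $\nu^{-1}(u)$ to a point; an iterated application of van Kampen shows that such a collapse of an $m$-element set adjoins $m-1$ extra free generators, yielding $\pi_1^{\mathrm{top}}(U_{\mathbb{C}}) = F_{2g+r-1} \ast F_\delta = F_{2g+r-1+\delta}$. Hence every finite quotient, in particular $G/p(G)$, is generated by at most $2g + r - 1 + \delta$ elements.

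\textbf{Sufficiency.} Fix lifts $g_1, \ldots, g_{2g+r-1+\delta}$ of a generating set of $G/p(G)$, and set $G_0 \coloneqq \langle p(G), g_1, \ldots, g_{2g+r-1}\rangle$. Since every Sylow $p$-subgroup of $G$ lies in $p(G) \subseteq G_0$, and any Sylow $p$-subgroup of $G_0$ is Sylow in $G$ by maximality of order, a direct check yields $p(G_0) = p(G)$; consequently $G_0/p(G_0)$ is generated by at most $2g+r-1$ elements. Abhyankar's theorem applied to $U^\nu$ produces a connected étale Galois $G_0$-cover $W^\nu \to U^\nu$; normalising $X^\nu$ in $k(W^\nu)$ extends this to a connected Galois $G_0$-cover $Y^\nu \to X^\nu$ ramified only over $X^\nu - U^\nu$. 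I would then feed the induced $G$-cover $\Ind_{G_0}^G Y^\nu \to X^\nu$ into the identification procedure of Propositions~\ref{prop_main_1} and~\ref{prop_main_2}, using the remaining $\delta$ elements $g_{2g+r}, \ldots, g_{2g+r-1+\delta}$ as the $G$-equivariant twisting data that specifies the identification of fibres above points of $\nu^{-1}(u)$ for each singular $u \in X$. This builds a connected Galois $G$-cover $Y \to X$; by Remark~\ref{rmk_preserving_inertia} its ramification remains concentrated in $X - U$, and since all identifications occur at points of $U$ where the cover is already étale, the restriction $Y|_U \to U$ is the desired connected étale Galois $G$-cover.

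\textbf{Main obstacle.} The technical core lies in the sufficiency step, where one must verify that the $\delta$ twisting elements simultaneously yield a connected total space, preserve the $G$-action at the identified fibres, and introduce no new ramification. This is precisely the content of Lemma~\ref{lem_cover_by_identification} together with Propositions~\ref{prop_main_1}--\ref{prop_main_2} in the projective setting; the only new piece is that the starting cover of $X^\nu$ now comes from Abhyankar's theorem rather than from Grothendieck--Shafarevich type bounds, and one must track that the prescribed ramification along $X^\nu - U^\nu$ is preserved throughout the inductive identifications. A secondary technical point is the tame refinement of the lifting theorem used in the necessity step, which should follow from a routine modification of the methods of~\cite{Kaya}.
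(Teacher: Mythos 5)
Your proposal is correct in outline, and the necessity direction is essentially the paper's argument (quotient by $p(G)$, lift the resulting tame cover to characteristic zero via a local--global patching statement --- this is Proposition~\ref{prop_loc_glob} in the paper, proved by formal patching following Obus rather than by modifying \cite{Kaya} --- and then compute $\pi_1^{\rm top}(U_{\mathbb{C}})$ as a free group of rank $2g+r-1+\delta$). The sufficiency direction, however, genuinely diverges from the paper. You realize the subgroup $G_0=\langle p(G),g_1,\ldots,g_{2g+r-1}\rangle$ directly as a quotient of $\pi_1(U^\nu)$ by the full Abhyankar conjecture for the smooth affine curve (your verification that $p(G_0)=p(G)$ is correct), and then adjoin the remaining $\delta$ generators through the identification machinery of Proposition~\ref{prop_main_1}, which is legitimate since those identifications occur at points of $U^\nu$ where the cover is \'{e}tale and the proposition places no order restriction on the adjoined element $\gamma$. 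The paper instead keeps everything prime-to-$p$ for as long as possible: it takes a prime-to-$p$ subgroup $H\subseteq N_G(P)$ surjecting onto $G/p(G)$ (via \cite[Lemma~5.3]{Ha_AC}), builds an $H$-Galois tame cover of the singular curve $X$ by the same inductive use of Proposition~\ref{prop_main_1}, and only then attaches the wild part $p(G)$ by formal patching at a single boundary point using a Harbater--Katz--Gabber cover and \cite[Theorem~2.1]{Ha_Embed}. Your route is shorter because it outsources the wild part to the already-proved Abhyankar conjecture; the paper's route buys explicit control of the inertia group $P\rtimes\langle c^{-1}\rangle$ over the chosen boundary point, which feeds into the Inertia Conjecture discussion of \S\ref{sec_IC}. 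Two small points you should make explicit: (i) you must first replace $U$ by its seminormalization (the paper's isomorphism~\eqref{eq_iso_pi}, via the universal-homeomorphism argument) before the RS-curve identification machinery applies and before ``collapsing each fibre to a point'' is literally the curve $U$; and (ii) rather than forming $\Ind_{G_0}^{G}Y^\nu$ in one step, you should apply Proposition~\ref{prop_main_1} once per identification, inducing up by one generator at a time, since that is how the connectivity of the total space is guaranteed.
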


As in the projective case, we first reduce the problem to: $U$ is seminormal. Additionally, we can always embed $U$ in a seminormal irreducible curve $X$ so that $X - U$ contains $r$ smooth points. We then see (as a consequence of a local-global principle in Proposition~\ref{prop_loc_glob}) that for a finite (continuous) group quotient $\pi_1(U) \twoheadrightarrow G$, the prime-to-$p$ quotient group $G/p(G)$ can be realized as the Galois group of a connected cover of $U_{\mathbb{C}}$, where $U_{\mathbb{C}}$ is a complex connected affine curve obtained by removing $r$ smooth points from the curve $X_{\mathbb{C}}$ described earlier. A complete (well known) realization of $\pi_1(U_{\mathbb{C}})$ then produces the necessary condition on $G$. Conversely, for any $G$ satisfying the condition, we construct a cover of $X$ that is \'{e}tale over $U$. This is done using Proposition~\ref{prop_main_1} and Proposition~\ref{prop_main_2} together with formal patching results.

Additionally (under the notation of Theorem~\ref{intro_thm_affine}), using arguments already developed for the affine and the projective cases, we see that the tame fundamental group $\pi_1^t(U \subset X)$ for the above seminormal curves $U \subset X$ can be related with the usual tame fundamental group $\pi_1^t(U^\nu)$:
\begin{enumerate}
\item[] Theorem~\ref{thm_tame}: \hspace{.2cm} $\pi_1^t(U \subset X) = \pi_1^t(U^\nu) \ast \widehat{F_{\delta}}$.
\end{enumerate}
 
Finally, we pose Question~\ref{que} similar to the generalization of the Inertia Conjecture (see \cite[Section~6]{Das}). As there is not much known about the status of it, we do not venture into the details. Remark~\ref{rmk_last} shows an immediate consequence of Theorem~\ref{thm_main_projective} and formal patching to answer a weaker version of the general question.

The structure of the paper is as follows. Section~\cref{sec_prelim} consists of the definitions and notation we will need; it is fairly self contained. The notion of obtaining curves by identifying effective divisors of a normal curve is recalled and studied briefly in Section~\cref{sec_RS_curve}. The seminormal curves and their lifting problems are the content of Section~\ref{sec_lift}. Our main results on the \'{e}tale fundamental groups are discussed in Section~\cref{sec_Main_proj} and Section~\ref{sec_Main_Affine}. The consequence of these results are in Section~\cref{sec_nec} and in Section~\cref{sec_IC}.

\subsection*{Acknowledgements}
I am indebted to Najmuddin Fakhruddin for discussions and insight; especially for suggesting the statement of our main theorem in the projective case and the use of seminormal curves.
%We thank the referee for various suggestions to improve the presentation of the paper.

\section{Preliminaries}\label{sec_prelim}
\subsection{Notation, Convention and Definitions}\label{sec_notation}
We use the following notation and convention throughout this article.
\begin{enumerate}
\item We will work over an algebraically closed field $k$; our main interest is when $\text{\rm char}(k)$ is an arbitrary prime $p$. In this case, we let $R$ be a complete discrete valuation ring with residue field $k$ and fraction field $K = \text{\rm Fr}(R)$.
\item All the $k$-curves are assumed to be reduced.
\item An $R$-curve is a reduced scheme $X_R \longrightarrow \text{\rm Spec}(R)$ that is a flat, separated scheme of finite type over $\text{\rm Spec}(R)$ of relative dimension one whose generic geometric fibres are connected. For an $R$-curve $X_R$ and any ring homomorphism $R \rightarrow S$, we denote the base change $X_R \times_R S$ over $\text{\rm Spec}(S)$ by $X_R$. In particular, the special fibre $X_R \times_R k$ of $X_R$ is denoted by $X_k$ and the generic fibre $X_R \times_R K$ by $X_K$.
\item For a finite group $G$, let $p(G)$ denote the (necessarily normal) subgroup of $G$ generated by all the Sylow $p$-subgroups of $G$. A finite group $G$ is said to be a \textit{quasi {$p$}-group} if $p(G) = G$.
\item For any scheme $X$ and a point $x \in X$, the local ring at $x$ is denoted by $\mathcal{O}_{X,x}$. We denote its completion at the maximal ideal by $\widehat{\mathcal{O}}_{X,x}$. When $\widehat{\mathcal{O}}_{X,x}$ is a domain, $K_{X,x}$ stands for its fraction field.
\item For any reduced scheme $X$, we denote its normalization as $\nu \colon X^\nu \longrightarrow X$.
\end{enumerate}

Let $X$ be a reduced scheme. A \textit{cover} of $X$ is defined to be a finite, generically separable morphism $Y \longrightarrow X$. Let $G$ be a finite group. A $G$-\textit{Galois cover} is a cover $Y \longrightarrow X$ of reduced schemes together with an inclusion $\rho \colon G \hookrightarrow \text{\rm Aut}_X(Y)$ such that $G$ acts simply transitively on each generic geometric fibre. Note that the above inclusion is always an isomorphism if $X$ is an integral scheme.

Let $S$ be a locally Noetherian connected reduced scheme. Let $a \colon \text{\rm Spec}\left( \Omega \right) \longrightarrow S$ be any geometric point. The \'{e}tale fundamental group $\pi_1(S,a)$ of $S$ at $a$ was defined in \cite[Exp. V, Section~7, Page 140]{SGA1}. The category $\text{FEt}_S$ of finite \'{e}tale covers of $S$ is a Galois category, equipped with a fibre functor $F \colon \text{FEt}_S \longrightarrow \left( \text{ finite sets} \right)$ that to a finite \'{e}tale cover $T \longrightarrow S$ assigns the set of geometric points $\text{\rm Spec}(\Omega) \longrightarrow T$ lying over $a$. Then $\pi_1(S,a) = \text{\rm Aut}(F)$, and $F$ defines an equivalence of the category $\text{\rm FEt}_S$ of finite \'{e}tale covers of $S$ with $\text{\rm Finite-}\pi_1(S,a)\text{\rm -sets}$. For a different choice of generic point $a'$, there is an automorphism $\pi_1(S,a) \cong \pi_1(S,a')$, canonical up to inner automorphisms. As our interest is the structure of group $\pi_1(S,a)$ itself, we drop the geometric point from notation and only consider the group $\pi_1(S)$, up to isomorphisms. When $S$ is an integral normal scheme, there is an ind-scheme $\widetilde{S}$, \'{e}tale over $S$, and the group $\pi_1(S)$ can be canonically identified with $\text{\rm Aut}\left( \widetilde{S}/S \right)$. In general, $\pi_1(S)$ is a profinite group that is a limit of its finite quotients. Using the properties of a Galois category, we have
$$\pi_1(S) = \underset{\substack{T  \longrightarrow S\\ \text{ finite Galois \'{e}tale cover}}}{\varprojlim} \text{ \rm Aut}\left( T/S \right).$$
By the general version of Riemann Existence Theorem (\cite[Exp XII, Corollary~5.2, page 337]{SGA1}), when $k = \mathbb{C}$, the \'{e}tale fundamental group $\pi_1(X)$ is the profinite completion of the topological fundamental group $\pi_1^{\text{\rm top}}(X(\mathbb{C}))$. We refer to \cite{survey_paper} for a survey on the structure of $\pi_1(X)$ and its quotients, where $X$ is a smooth $k$-curve. 

A lot of these structure theorems in positive characteristic are obtained by first relating to the \'{e}tale fundamental group of an `analogous' complex curve which is then understood from topology. This leads us to \textit{the lifting problems}. Assume that $\text{\rm char}(k) = p >0$. Let $R$ be a complete discrete valuation ring with residue field $k$ and its fraction field $K = \text{\rm Fr}(R)$ with $\text{\rm char}(K) = 0$. Let $X$ be a connected $k$-curve. A \textit{lift of} $X$ \textit{over} $\text{\rm Spec}(R)$ is an $R$-curve $X_R$ with special fibre $X_R \times_R k = X$. The usual lifting problem asks whether the curve $X$ lifts over $\text{\rm Spec}(R)$. We also have the lifting problem for finite covers of $X$: let $f \colon Z \longrightarrow X$ be a finite cover of connected $k$-curves. A \textit{lift of} $f$ \textit{over} $\text{\rm Spec}(R)$ is a finite cover $f_R \colon Y_R \longrightarrow X_R$ of $R$-curves such that the special fibre $f_R \times_R k \colon Y_R \times_R k \longrightarrow X_R \times_R k$ of the map $f_R$ is $f$. Similarly, there are local lifting problems for a complete discrete valuation ring and its extensions. The solution to such problems are well studied when $X$ is a smooth curve. The usual lifting problem for $X$ and the lifting problem for (Galois) covers of $X$ can always be solved (see \cite[Corollary~22.2, page 145]{Hartshorne_Def} and \cite[Theorem~1.5]{Obus}). We refer to \cite{Obus} and \cite{Obus_2} for a detailed exposition on the lifting problems for smooth curves and on the local lifting problems. In this article, we deal with lifting of singular curves and of their \'{e}tale covers. This problem is open in general (\cite[References and Further Results, page 148]{Hartshorne_Def}). As a consequence of \cite{Kaya}, we will see in Proposition~\ref{prop_lift_seminormal} that the usual and the \'{e}tale lifting problem for a seminormal curve has a solution.

\subsection{Rosenlicht-Serre Curves}\label{sec_RS_curve}
This section is devoted to the singular curves `defined by modulus' in \cite[Chapter IV, Section 4, page 61--62]{Serre_Alg}. Let $k$ be an algebraically closed field of arbitrary characteristic.

Let $X$ be a (not necessarily irreducible) projective $k$-curve and $X_{\text{Sing}} \subset X$ be its singular locus. Let $\nu \colon X^{\nu} \longrightarrow X$ denote the normalization map. Then $X^\nu$ is a smooth projective curve, and $\nu$ is a birational finite morphism that induces an isomorphism $X^{\nu} - \nu^{-1}(X_{\text{Sing}}) \cong X - X_{\text{Sing}}$. For each closed point $x \in X$, we have an inclusion of $k$-algebras (see \cite[Chapter IV, Section 1 and 2, page 58--59]{Serre_Alg})
\begin{equation}\label{eq_inclusion}
\left(\nu_{*}\mathcal{O}_{X^\nu}\right)_x = \bigcap_{\nu(y)=x} \mathcal{O}_{X^\nu,y} \supset k + J_x \supset \mathcal{O}_{X,x} \supset k + \mathfrak{c}_x
\end{equation}
where $J_x$ is the Jacobson radical of the semi-local ring $\left(\nu_{*}\mathcal{O}_{X^\nu}\right)_x$ and $\mathfrak{c}_x$ is the conductor ideal $\text{\rm Ann}\left(\left(\nu_{*}\mathcal{O}_{X^\nu}\right)_x/ \mathcal{O}_{X,x}\right)$. Note that $\mathfrak{c}_x$ is characterized by the fact that this is the largest ideal of $\mathcal{O}_{X,x}$ that is also an ideal of $\left( \nu_* \mathcal{O}_{X^\nu} \right)_x$.

On the other hand, a curve $X$ is constructed from its normalization $X^\nu$ in \cite{Rosenlicht} (see \cite[Chapter IV, Section 3 and 4, page 60--62]{Serre_Alg}). Let $X^\nu$ be a smooth projective $k$-curve. Let $D_1, \, \ldots, \, D_l$ be effective (Weil) divisors on $X^\nu$ with pairwise disjoint support where $D_i = \sum_{y \in \text{\rm Supp}\left(D_i \right)} n_y y$, \, $1 \leq i \leq l$. Suppose that for each $i$, \, $\text{\rm deg}\left( D_i \right) = \sum_{y \in \text{\rm Supp}\left(D_i \right)} n_y \geq 2$. Consider the topological space
$$X = \left( X^\nu - \bigsqcup_{1 \leq i \leq l} \text{\rm Supp}\left( D_i \right)\right) \sqcup \{ x_1,\, \ldots, \, x_l\}$$
equipped with cofinite topology. Let $\nu \colon X^\nu \longrightarrow X$ be the topological quotient map. We endow $X$ with a sheaf of rings $\mathcal{O}_X$ such that
\[
\mathcal{O}_{X,x} = \begin{cases}
\mathcal{O}_{X^\nu,\nu^{-1}(x)} & \text{\rm if \,} x \neq x_i \, \text{\rm for any } 1 \leq i \leq l\\
k+\mathfrak{c}_i & \text{\rm if \,} x=x_i, \, 1\leq i \leq l
\end{cases}
\]
where $\mathfrak{c}_i$ is the ideal of $\bigcap_{y \in \text{\rm Supp}\left( D_i \right)} \mathcal{O}_{X^\nu, y}$ formed by functions $f$ such that $f \equiv 0 \pmod{D_i}$, i.e. $\text{\rm v}_y(f) \geq n_y$ for all points $y \in \text{\rm Supp}\left( D_i \right)$, \, $\text{\rm v}_y$ being the valuation attached to $y$. By \cite[Chapter IV, Proposition 2, page 60]{Serre_Alg}, the sheaf $\mathcal{O}_X$ endows $X$ with a structure of an algebraic curve having $\nu \colon X^\nu \longrightarrow X$ as the normalization map. Moreover, (loc. cit. Section 4) the singular locus of $X$ is given by $X_{\text{\rm Sing}} = \{x_1, \, \ldots, \, x_l\}$ and $\mathfrak{c}_i$ is the conductor ideal $\mathfrak{c}_{x_i} = \text{\rm Ann}\left( \left( \nu_*\mathcal{O}_{X^\nu}\right)_{x_i}/\mathcal{O}_{X,x_i} \right)$.

We have the following alternative description for $X$. For any closed point $y \in X^\nu$, let $\mathfrak{m}_y$ denote the maximal ideal of $\mathcal{O}_{X^\nu,y}$. Let $Y^\nu \coloneqq  X^\nu(D_1 + \cdots + D_l)$\footnote{in the notation of \cite[21.7.1, page 277]{EGA}} be the closed subscheme of $X$ defined by the image of $\bigsqcup_{\substack{1 \leq i \leq l}} \bigsqcup_{\substack{y \in \text{\rm Supp}\left( D_i \right)}} \text{Spec}\left( \mathcal{O}_{X^\nu,y}/ \mathfrak{m}_{y}^{n_y} \right)$ under the canonical morphism to $X^\nu$. The ringed space of amalgamated union $X^\nu \bigsqcup_{Y^\nu} \{x_1, \, \ldots, \, x_l \}$ is a scheme, identified with $X$ via the following cocartesian (as well as cartesian; \cite[Theorem~7.1]{Ferrand}) diagram
\begin{center}
\begin{equation*}\label{RS_diagram}
\tag{*}
\begin{tikzcd}
Y^\nu \arrow[hookrightarrow]{r} \arrow{d} & X^\nu \arrow{d}{\nu} \\
\bigsqcup_{1 \leq i \leq l} \text{\rm Spec}\left( k \right) = \{x_1, \ldots, x_l\} \arrow[hookrightarrow]{r} & X
\end{tikzcd}
\end{equation*}
\end{center}
The map $\nu$ is a finite birational morphism inducing isomorphism between $X^\nu - Y^\nu$ and $X - \{x_1, \ldots, x_l\}$, and the natural map $X^\nu \sqcup \{x_1, \ldots, x_l\} \longrightarrow X$ is a surjective morphism that is a quotient map of the underlying topological spaces. As we will be dealing with such curves throughout this article, we make the following (non-standard) definition.

\begin{definition}\label{def_RS_curve}
A curve $X$ is said to be a \textit{Rosenlicht-Serre curve} (an RS curve) if $X$ is obtained as the cocartesian diagram~\eqref{RS_diagram} above from a smooth projective curve $X^\nu$ and a set $\{D_1, \, \ldots, \, D_l\}$ of effective divisors with pairwise disjoint supports. For convenience, we will denote $X = (X^\nu, \{ D_i\}_{1 \leq i \leq l})$.
\end{definition}

Observe that $X$ is projective as $\nu$ is the normalization map and $X^\nu$ is projective. In general, an RS curve need not be connected. Such a curve $X$ is connected if and only if the set $\bigcup_{1 \leq i \leq l} \text{\rm Supp}\left( D_i \right)$ contains at least one point from each irreducible component of $X^\nu$.

As every RS curve is defined using a cocartesian diagram (which are also cartesian), we define admissible morphisms of RS curves as the morphisms of cocartesian diagrams.

\begin{definition}\label{def_morphism_RS}
Suppose that $Z = (Z^\nu, \{E_j\}_{j \in J})$ and $X = (X^\nu, \{D_i\}_{i \in I})$ be two RS curves. An \textit{admissible morphism} $Z \longrightarrow X$ is defined to be a pair $(f^\nu, f_{ \rm sing})$ where $f^\nu \colon Z^\nu \longrightarrow X^\nu$ is a finite morphism of smooth projective $k$-curves, $f_{\rm sing} \colon Z_{ \rm sing} = \{z_j\}_{j \in J} \longrightarrow \{x_i\}_{i \in I} = X_{ \rm sing}$ is a (necessarily finite) surjective morphism of the corresponding singular loci satisfying the following property.
\begin{itemize}
\item For each $j \in J$, \, $f$ induces a finite morphism $Z^\nu(E_j) \longrightarrow X^\nu(D_i)$ of schemes where $f_{ \rm sing}(z_j) = x_i$.
\end{itemize}
Here $X^\nu(D_i)$ (respectively, $Z^\nu(E_j)$) is the closed subscheme of $X^\nu$ (respectively, of $Z^\nu)$ corresponding to the effective divisor $D_i$ (respectively, $E_j$); see \cite[21.7.1, page 277]{EGA}.
\end{definition}

By the universal property of a cocartesian diagram, we see that each admissible morphism $(f^\nu, f_{ \rm sing})$ of RS curves defines a unique finite surjective morphism $f$ of the corresponding singular curves. Moreover, if $f^{\nu}$ is a finite cover, so is $f$. Conversely, suppose that $f \colon Z \longrightarrow X$ be a finite surjective morphism of RS curves. Then $f$ induces a finite morphism $f^\nu \colon Z^\nu \longrightarrow X^\nu$. Since the preimage of a smooth point in $X$ consists of smooth points in $Z$, the map $f$ also induces a finite map $f_{ \rm sing} \colon Z_{ \rm sing} \longrightarrow X_{ \rm sing}$ between finite sets of closed points, but this map need not be surjective.  If we additionally assume that either $f$ \textit{is an \'{e}tale cover} or $f$ is \textit{a cover that is \'{e}tale over the singular points of }$X$, then the induced map $f_{ \rm sing}$ is surjective. In this case, by the universal property of cartesian diagrams, we obtain an admissible finite cover $(f^\nu, f_{ \rm sing})$.

\begin{remark}\label{rmk_admissible_map_correspondence}
Note that the association $(f^\nu, f_{ \rm sing}) \mapsto f$ is a bijective correspondence between the finite admissible covers of RS curves and the finite covers of corresponding singular curves inducing surjection of the singular loci.
\end{remark}

\subsection{Lifting of covers of Seminormal Curves}\label{sec_lift}
Let $k$ be an algebraically closed field of arbitrary characteristic. We start by recalling the algebraic notion of seminormal rings (see \cite{Vitulli} for an exposition). For a ring $B$, let $J(B)$ denote the Jacobson radical of $B$. For any point $x \in \text{Spec}(B)$, let $B_x$ denote the localization of $B$ at the prime ideal $\mathfrak{p}_x$ corresponding to the point $x$. We recall the following definition of seminormality due to Traverso.

\begin{definition}[{\cite[Definition~2.4, page 447]{Vitulli}}]\label{def_semi_normal}
Let $A \subset B$ be an extension of rings. The seminormalization ${}^+_B A$ of $A$ in $B$ is defined to be the ring
$${}^+_B A \coloneqq \{b \in B \, | \, b_x \in A_x + J(B_x) \text{ for all } x \in \text{Spec}(A) \}$$ 
For a Mori ring $A$ (i.e., $A$ is a reduced ring $A$ and the integral closure $\bar{A}$ of $A$ in its total ring of fractions is finite as an $A$-module), the seminormalization $A^+$ of $A$ is defined to be the seminormalization ${}^+_{\bar{A}} A$ of $A$ in $\bar{A}$. In this case, $A$ is said to be a seminormal ring if $A = A^+$.
\end{definition}

Let $A$ be a reduced excellent ring. Let $\bar{A}$ be the integral closure of $A$ in its total ring of fractions. Then the canonical ring extension $A \subset \bar{A}$ factors uniquely as $A \subset A^+ \subset \bar{A}$. Moreover, there is no proper \textit{subintegral extension} contained in $A^+ \subset \bar{A}$, i.e. if $A^+ \subsetneq C \subseteq \bar{A}$ so that the associated map $\text{Spec}\left( C\right) \longrightarrow \text{Spec}\left( A^+ \right)$ is a bijection inducing isomorphisms on the residue fields, we have $C = \bar{A}$.

\begin{definition}\label{def_seminormal_curves}
Let $X$ be a (reduced) $k$-curve. We say that $X$ is \textit{seminormal} if for each point $x \in X$, the local ring $\mathcal{O}_{X,x}$ at $x$ is a seminormal ring.
\end{definition}

Now we see that every seminormal projective $k$-curve is an RS curve (Section~\ref{sec_RS_curve}). Let $X$ be any projective $k$-curve and $\nu \colon X^\nu \longrightarrow X$ be the normalization map. As in Equation~\eqref{eq_inclusion}, there is an inclusion of $k$-algebras
$$A_x \coloneqq \left( \nu_* \mathcal{O}_{X^\nu} \right)_x \supset k + J(A_x) \supset \mathcal{O}_{X,x}$$ for each point $x \in X$. Here $A_x = \bigcap_{\nu(y)=x} \mathcal{O}_{X^\nu,y}$ is the integral closure of $\mathcal{O}_{X,x}$ in its total ring of fractions. By Definition~\ref{def_semi_normal}, the seminormalization of $\mathcal{O}_{X,x}$ is the ring
$$\mathcal{O}_{X,x}^+ = \{ a \in A_x \, | \, a_x \in \mathcal{O}_{X,x} + J(A_x) \}.$$
Now suppose that $X$ is a seminormal curve. Then for each point $x \in X$, we have $\mathcal{O}_{X,x} = \mathcal{O}_{X,x}^+ = k + J(A_x)$. As the conductor ideal $\mathfrak{c}_x$ is the largest common ideal of $\mathcal{O}_{X,x}$ and $A_x$, we have $\mathfrak{c}_x = J(A_x)$. So for each point $x \in X$, the pullback $\nu^*(x)$ is a reduced divisor $\nu^*(x) = \nu^{-1}(x) = \sum_{\nu(y)=x} y$ on $X^\nu$, and we identify the seminormal projective $k$-curve $X$ with the amalgamated union

\begin{equation}\label{eq_sminormal_is_RS}
X = X^\nu \sqcup_{\left( \bigsqcup_{x \in X_\text{\rm sing}} \nu^{-1}(x) \right)} {X_{\text{\rm sing}}}.
\end{equation}

Now we study the lifting problems (see Section~\cref{sec_notation}) for seminormal curves. As mentioned earlier, a solution to a lifting problem for an arbitrary singular curve is not known. As a consequence of \cite{Kaya}, we see that the \'{e}tale lifting problem for a seminormal curve has a solution.

\begin{proposition}\label{prop_lift_seminormal}
Let $X$ be a connected projective seminormal $k$-curve. Let $R$ be a complete discrete valuation ring with residue field $k$ and fraction field $K$. Let $X_{\text{\rm sing}} = \{x_1, \cdots, x_l\}$ be the singular locus of $X$, \, $l \geq 1$. Consider the normalization $\nu \colon X^\nu \longrightarrow X$ and a lift $X_R^\nu$ of $X^\nu$ to a smooth projective $R$-curve. Then there is a connected lift $X_R$ of $X$ over $\text{\rm Spec}(R)$ with geometrically connected fibres and a finite morphism $\nu_R \colon X_R^\nu \longrightarrow X_R$ such that the special fibre $\nu_R \times_R k$ is the map $\nu$, and the generic fibre $\nu_K \coloneqq \nu_R \times_R K \colon X_K^\nu  \longrightarrow X_K $ has the following properties for any algebraic closure $\bar{K}$ of $K$.
\begin{enumerate}
\item The curve $X_K^\nu$ is a smooth projective $K$-curve; in particular, the geometric fibre $X^\nu_{\bar{K}} \coloneqq X_K^\nu \times_R \bar{K}$ is a smooth projective $\bar{K}$-curve. The irreducible components of $X^\nu_{\bar{K}}$ are in a genus preserving bijective correspondence with the irreducible components of $X^\nu$.\label{i1}
\item The curve $X_{\bar{K}} \coloneqq X_K \times_K \bar{K}$ is a connected seminormal projective $\bar{K}$-curve.\label{i2}
\item The map $\nu_{\bar{K}} = \nu_K \times_K \bar{K} \colon X^\nu_{\bar{K}} \longrightarrow X_{\bar{K}}$ is the normalization map.\label{i3}
\item There is a bijection $\phi \colon X_{\text{\rm sing}} \longrightarrow \left( X_{\bar{K}} \right)_{\text{\rm sing}}$ such that for each $x \in X_{\text{\rm sing}}$,
$$|\nu^{-1}(x)| = | (\nu_{\bar{K}})^{-1}(\phi(x))|.$$\label{i4}
\end{enumerate}
Moreover, any finite \'{e}tale cover $f \colon Z \longrightarrow X$ of connected $k$-curves lifts  to a finite \'{e}tale cover $f_R$ of $X_R$. The map $f_R$ is Galois with group $G$ if $f$ is so.
\end{proposition}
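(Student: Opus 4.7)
The plan is to realize both $X$ and the sought lift $X_R$ as Ferrand amalgamations and transport the gluing data along the given smooth lift $X_R^\nu$. Since $X$ is seminormal, \eqref{eq_sminormal_is_RS} presents $X$ as the pushout $X^\nu \sqcup_D X_{\text{\rm sing}}$, where $D = \bigsqcup_{i=1}^l \nu^{-1}(x_i)$ is a reduced closed subscheme of $X^\nu$. As $X_R^\nu$ is smooth over the complete discrete valuation ring $R$ with algebraically closed residue field, each closed point of $D$ lifts by Hensel to a unique section $\text{\rm Spec}(R) \longrightarrow X_R^\nu$; the disjoint union of these sections cuts out a closed subscheme $D_R \subset X_R^\nu$ that is finite étale over $\text{\rm Spec}(R)$ and has special fibre $D$. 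One then forms the candidate lift
\[
X_R := X_R^\nu \sqcup_{D_R}\left( \bigsqcup_{i=1}^l \text{\rm Spec}(R)\right),
\]
with the collapsing map sending every section of $D_R$ that lifts a point of $\nu^{-1}(x_i)$ to the $i$-th copy of $\text{\rm Spec}(R)$. The existence of this Ferrand pushout as a projective $R$-scheme together with the finite birational morphism $\nu_R \colon X_R^\nu \longrightarrow X_R$, and, crucially, its $R$-flatness and compatibility with arbitrary base change, is what \cite{Kaya} delivers in the seminormal setting.

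Granted the pushout with these properties, the identification $X_R \times_R k = X$ is built into the construction, and connectedness of every geometric fibre follows from Zariski's connectedness theorem applied to the proper flat morphism $X_R \longrightarrow \text{\rm Spec}(R)$ with connected special fibre. For the generic-fibre statements \ref{i1}--\ref{i4}, base-change the pushout to $\bar K$: the smooth projective $X_{\bar K}^\nu$ has the same component count and genera as $X^\nu$ because $X_R^\nu \longrightarrow \text{\rm Spec}(R)$ is smooth projective, giving \ref{i1}. At each image point $\phi(x_i) \in X_{\bar K}$ the pushout glues exactly $|\nu^{-1}(x_i)|$ distinct $\bar K$-points of $X_{\bar K}^\nu$ (coming from disjoint sections of $X_R^\nu$) into a single closed point; the local model is $|\nu^{-1}(x_i)|$ smooth branches amalgamated along one closed point, which is the ordinary multi-branch singularity, seminormal with the evident normalization map. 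This delivers \ref{i2}--\ref{i4}.

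For the lifting of a finite étale cover $f \colon Z \longrightarrow X$, use Remark~\ref{rmk_admissible_map_correspondence} to identify $f$ with an admissible datum $(f^\nu, f_{\text{\rm sing}})$ in which $f^\nu \colon Z^\nu \longrightarrow X^\nu$ is a finite étale cover of smooth projective curves. Lift $f^\nu$ to a finite étale cover $f_R^\nu \colon Z_R^\nu \longrightarrow X_R^\nu$ by Grothendieck's theorem on lifting étale covers of smooth proper $R$-curves; the pullback $E_R := (f_R^\nu)^{-1}(D_R)$ is then finite étale over $\text{\rm Spec}(R)$, hence a disjoint union of sections of $Z_R^\nu$. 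Apply the Ferrand pushout once more, now to $(Z_R^\nu, E_R)$ with the collapsing dictated by $f_{\text{\rm sing}}$, to obtain an $R$-curve $Z_R$, again $R$-flat by \cite{Kaya}; the universal property of cocartesian diagrams induces a finite morphism $f_R \colon Z_R \longrightarrow X_R$. Étaleness of $f_R$ is immediate away from the collapsed loci; at each singular point of $X_R$ the local picture is the trivial étale cover of an ordinary multi-branch singularity assembled by étale gluing of its branches, so étaleness persists there as well. In the Galois case, the $G$-action on $Z_R^\nu$ preserves $E_R$ by construction and hence descends to $Z_R$, giving $X_R = Z_R / G$.

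The principal obstacle is establishing that the Ferrand-type pushouts $X_R$ and $Z_R$ are $R$-flat and commute with base change, so that their special fibres really are $X$ and $Z$: scheme-theoretic amalgamations are rarely flat, and the proposition rests fundamentally on the seminormal lifting theorem of \cite{Kaya}. Once that input is granted, every remaining step reduces to bookkeeping on fibres and straightforward use of the universal properties of cocartesian squares.
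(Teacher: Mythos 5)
Your construction of $X_R$ is exactly the paper's: it too chooses sections $s_y$ of the smooth $R$-curve $X_R^\nu$ through the points of $\nu^{-1}(X_{\text{\rm sing}})$, forms the cocartesian square via \cite[Lemma~3]{Kaya}, and reads off \eqref{i1}--\eqref{i4} from the base-changed diagram over $\bar{K}$, which is both cocartesian and cartesian by \cite[Theorem~7.1]{Ferrand}. One small slip: a section of $X_R^\nu$ through a given closed point is \emph{not} unique (smoothness plus henselianity gives existence only); this is harmless since the statement only asserts existence of a lift, but write ``choose'' rather than ``unique.'' Where you genuinely diverge is the final assertion on lifting \'{e}tale covers. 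The paper gets this for free from the general deformation theory of \'{e}tale covers (\cite[Proposition~22.5, page 147]{Hartshorne_Def}, i.e.\ topological invariance of the \'{e}tale site plus Grothendieck existence), and deduces the Galois property by a Grothendieck-existence argument as in \cite[Proof of Theorem~3.1]{Obus}; you instead lift the normalized cover $f^\nu$ and re-glue by a second Ferrand pushout. Your route is more explicit and keeps track of the branch structure over the singular locus, but it obliges you to verify that the special fibre of $Z_R \longrightarrow X_R$ really is $f$ (via the cartesian property of the pushout and seminormality of $Z$) and that $f_R$ is \'{e}tale at the glued points --- a relative version of the local-ring computation in Lemma~\ref{lem_cover_by_identification} --- both of which you assert rather than check. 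Note also that the collapsing on $E_R$ is dictated by the fibres of $Z^\nu \longrightarrow Z$ over $Z_{\text{\rm sing}}$, not by $f_{\text{\rm sing}}$ alone, and that the $G$-stability of this partition (needed for the action to descend) should be recorded. These are fixable points of bookkeeping, not gaps in the idea.
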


\begin{proof}
Since $X$ is assumed to be seminormal, it is an RS curve defined by the smooth projective curve $X^\nu$ and the reduced effective divisors $\left\{D_x = \nu^{-1}(x)\right\}_{x \in X_{\text{\rm sing}}}$. As $R$ is strictly henselian, the specialization map $X_R^\nu (R) \longrightarrow X^\nu(k)$ is surjective. For each $y \in X^\nu$ lying over a singular point of $X$, choose a section $s_y \colon \text{\rm Spec}\left( R \right) \longrightarrow X_R^\nu$ corresponding to $y$. Let $Y_R$ denote the closed subscheme of $X_R^\nu$ defined by the sections $s_y$ as above. By \cite[Lemma~3]{Kaya}, we have the following cocartesian diagram
\begin{center}
\begin{equation*}
\begin{tikzcd}
Y_R \arrow[hookrightarrow]{r} \arrow{d} & X_R^\nu \arrow{d}{\nu_R} \\
\bigsqcup_{x \in X_{\text{\rm sing}}} \text{\rm Spec}\left( R \right) \arrow[hookrightarrow]{r} & X_R
\end{tikzcd}
\end{equation*}
\end{center}
where $X_R$ is a lift of $X$ over $\text{\rm Spec}(R)$. Since the special fibre $X$ is connected, the $R$-curve $X_R$ is connected as well. By flatness of $X^\nu_R$, we have \eqref{i1}. The closed subscheme $\nu_R(Y_R)$ in $X_R$ is defined by a positive reduced cycle of codimension one (\cite[21.7.2, page 277]{EGA}). As its closed fibre $X_{\rm sing}$ consists of finitely many points, $\nu_R(Y_R) = \sum_{x \in X_{\rm sing}} x_R$ where $x_R \colon \text{Spec}(R) \longrightarrow X_R$ has special fibre $x \in X_{\rm sing}$.

To see the other properties of the geometric generic fibre, note that we have the cocartesian and cartesian diagram (\cite[Theorem~7.1]{Ferrand}):
\begin{center}
\begin{equation*}
\begin{tikzcd}
Y_{\bar{K}} \arrow[hookrightarrow]{r} \arrow{d} & X_{\bar{K}}^\nu \arrow{d}{\nu_{\bar{K}}} \\
\{x_{\bar{K}}\}_{x \in X_{\text{\rm sing}}} = \bigsqcup_{x \in X_{\text{\rm sing}}} \text{\rm Spec}\left( \bar{K} \right) \arrow[hookrightarrow]{r} & X_{\bar{K}}
\end{tikzcd}
\end{equation*}
\end{center}
Here $Y_{\bar{K}}$ is the geometric generic fibre of $Y_R$ that is the same as the closed subscheme of $X^\nu_{\bar{K}}$ defined by the generic geometric fibres of the sections $s_y$, \, $\nu(y) \in X_{\rm sing}$, and each point $x_{\bar{K}}$ is the generic geometric fibre of $x_R$. So $X_{\bar{K}}$ has the singular locus $\left(X_{\bar{K}}\right)_{\rm sing} = \{x_{\bar{K}}\}_{x \in X_{\rm sing}}$, and $X_{\bar{K}}$ is identified as the amalgamated union $X_{\bar{K}}^\nu \sqcup_{Y_{\bar{K}}} \left( X_{\bar{K}} \right)_{\rm sing}$. Now the properties \eqref{i2}--\eqref{i4} are straight forward to check.

If $f \colon Z \longrightarrow X$ is a finite \'{e}tale cover of connected $k$-curves, a lift $f_R$ of $f$ over $\text{\rm Spec}(R)$ exists by \cite[Proposition~22.5, page 147]{Hartshorne_Def}. Using a general argument by Grothendieck's Existence Theorem (\cite[Theorem~21.2, page 140]{Hartshorne_Def}) as in \cite[Proof of Theorem~3.1]{Obus}, if $f$ is $G$-Galois, then $f_R$ is Galois with group $G$ as well.
\end{proof}

\begin{remark}\label{rmk_lifting}
In the above proposition, we obtained a lift $X_R$ for the seminormal curve $X$. In general, a lift is not unique (when it exists). For example, for a semistable curve $X$, there exists a lift $X'_R$ with smooth generic fibre (see \cite[Theorem~4.2]{Saidi}). For our purpose, we want the generic geometric fibre of the lift to have similar properties as the curve $X$ near the singular points.
\end{remark}

\begin{remark}
Since $X$ is a proper $R$-curve, the specialization functor $\text{\rm FEt}_{X_R} \longrightarrow \text{\rm FEt}_X$ is an equivalence of categories by \cite[\href{https://stacks.math.columbia.edu/tag/0A48}{Lemma 0A48}]{SP}.
\end{remark}

We end this section with a construction of seminormal curves (see \eqref{eq_identify} below) obtained by identifying points (extending the notion of Section~\cref{sec_RS_curve}) and a descent for covers under an identification map (Lemma~\ref{lem_cover_by_identification}). Once again, we work over an algebraically closed field $k$ of arbitrary characteristic.

Let $Y$ be a seminormal curve and ${\sim}$ be an equivalence relation on the closed points of $Y$ such that the set $\mathcal{C}_{{\sim}}$ of non-trivial equivalence classes is finite and non-trivial. In the following, we obtain a seminormal $k$-curve $Y/{\sim}$ together with a finite birational morphism
\begin{equation}\label{eq_identify}
q \colon Y \longrightarrow Y/{\sim}
\end{equation}
that induces an isomorphism restricted to $Y - \bigsqcup_{\substack{y \in C \\C \in \mathcal{C}_{\sim}}} y$, and for any point $y \in C$ ($C \in \mathcal{C}_{{\sim}}$), \, $q^* (q(y))$ is the reduced divisor $\sum_{y' \in C} y'$ on $Y$.

Suppose that $Y_{\rm sing} = \{y_1, \ldots, y_l\}$. Since $Y$ is seminormal, it is an RS curve
$$Y = Y^\nu \sqcup_{\left( \bigsqcup_{1 \leq i \leq l} \nu^{-1}(y_i) \right)} \{ y_1, \ldots, y_l\} \hspace{2 em} \text{(see \eqref{eq_sminormal_is_RS})}$$
obtained from its normalization $\nu \colon Y^\nu \longrightarrow Y$ and the finite set of reduced effective divisors $\{D_i = \nu^{-1}(y_i)\}_{1 \leq i \leq l}$ on $Y^\nu$. For each $C \in \mathcal{C}_{{\sim}}$, consider the reduced effective divisor $E_C = \sum_{y \in C} \nu^{-1}(y)$. Let $\Lambda$ be the indexing set obtained from $\{1, \ldots, l\}$ by removing all $i$ such that $y_i \in C$ for some $C \in \mathcal{C}_{{\sim}}$. We define $Y/{\sim}$ as the RS curve
\begin{equation}\label{eq_identification_amalgam}
Y/{\sim} \coloneqq Y^\nu \sqcup_{Y'} \left( Y/{\sim} \right)_{\rm sing}
\end{equation}
where $Y' = \sqcup_{i \in \Lambda} \nu^{-1}(y_i) \bigsqcup \sqcup_{\substack{y \in C\\ C \in \mathcal{C}_{{\sim}}}} \nu^{-1}(y)$. Since each $E_C$ and $D_j$ are reduced effective divisors having pairwise disjoint support, the curve $Y/{\sim}$ is a seminormal curve and $Y^\nu \longrightarrow Y/{\sim}$ is the normalization map. Since we have a factorization $Y^\nu \longrightarrow Y \overset{q}\longrightarrow Y/{\sim}$, the statements about the map $q$ follows.

\begin{remark}\label{rmk_factorization}
Let $X$ be a connected seminormal curve over an algebraically closed field of arbitrary characteristic. The normalization map $\nu\colon X^\nu \longrightarrow X$ factors as a composition
\begin{equation}\label{eq_factorization}
\nu \colon X^\nu = Y_t \longrightarrow Y_{t-1} \longrightarrow \cdots \longrightarrow Y_1 \longrightarrow Y_0 = X
\end{equation}
where each $Y_i$ is a seminormal curve, and $Y_i$ is obtained from $Y_{i+1}$ by identifying two distinct closed points. In other words, there are points $y_1, \, y_2 \in Y_{i+1}$ such that $Y_i = Y_{i+1}/{\sim}$ and the only non-trivial equivalence class of the equivalence relation ${\sim}$ is $\{y_1,y_2\}$.
\end{remark}

In the following, we see that a finite cover of curves descent under identification maps under a natural condition. This result will be used in ours proofs of the main results in Section~\cref{sec_Main_proj}.

\begin{lemma}\label{lem_cover_by_identification}
Let $f \colon Z \longrightarrow Y$ be a finite surjective cover of seminormal $k$-curves that is \'{e}tale over the singular points of $Y$. Let ${\sim}$ and ${\sim}'$ be two equivalence relations on the closed points of $Y$ and $Z$, respectively, such that the set of non-trivial equivalence classes $\mathcal{C}_{{\sim}}$ and $\mathcal{C}_{{\sim}'}$ are both non-empty finite sets. Additionally, suppose that the following hold.
\begin{enumerate}
\item The relations preserve $f$, i.e. $z {\sim}' z'$ implies $f(z) {\sim} f(z')$.\label{it:1}
\item For any $C \in \mathcal{C}_{{\sim}}$, the set $f^{-1}(C)$ has a partition by sets in $\mathcal{C}_{{\sim}'}$ having the same size as $C$.\label{it:2}
\end{enumerate}
Then $f$ descends to a finite surjective cover $g \colon Z/{\sim}' \, \longrightarrow \, Y/{\sim}$ that is \'{e}tale over the singular points of $Y/{\sim}$. Moreover, if $z \in Z$ is a point, and $f$ is \'{e}tale at all points in the class containing $z$, then the cover $g$ is also \'{e}tale at the image of $z$ in $Z/{\sim}'$ under the identification map $Z \longrightarrow Z/{\sim}'$.

Further assume that $f$ is $G$-Galois for a finite group $G$ and the $G$-action on $Z$ commutes with the relation ${\sim}'$, namely, for any $g \in G$ and $z \in Z$,
$$z {\sim}' z' \Rightarrow g \cdot z {\sim}' g \cdot z'.$$
Then the cover $g$ is also $G$-Galois.
\end{lemma}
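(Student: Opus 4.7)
My approach is to realize the descended morphism $g$ as an admissible morphism of RS curves in the sense of Definition~\ref{def_morphism_RS}, then invoke the correspondence of Remark~\ref{rmk_admissible_map_correspondence}. By \eqref{eq_sminormal_is_RS} and \eqref{eq_identification_amalgam}, all four of $Y$, $Z$, $Y/{\sim}$ and $Z/{\sim}'$ are RS curves with the same normalizations $Y^\nu$, $Z^\nu$; only their singular loci and the attached reduced divisors on the normalizations differ, being enlarged in $Y/{\sim}$ (resp.\ $Z/{\sim}'$) to absorb the new amalgamation divisors $E_C \coloneqq \sum_{y \in C}\nu^{-1}(y)$ for $C \in \mathcal{C}_{\sim}$ (resp.\ $E_{C'} \coloneqq \sum_{z \in C'}\nu^{-1}(z)$ for $C' \in \mathcal{C}_{\sim'}$).

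I then construct the admissible pair $(f^\nu, \widetilde{f}_{\rm sing})$ step by step. Functoriality of normalization lifts $f$ to a finite surjective cover $f^\nu \colon Z^\nu \longrightarrow Y^\nu$. For the singular-locus map, hypothesis~\ref{it:1} forces each non-trivial class $C' \in \mathcal{C}_{\sim'}$ to satisfy $f(C') \subset C$ for some $C \in \mathcal{C}_{\sim}$, or else $f(C')$ is a single point of $Y_{\rm sing}$ (since \'{e}taleness of $f$ over the smooth locus of $Y$ forces $f(Z_{\rm sing}) \subset Y_{\rm sing}$); this specifies $\widetilde{f}_{\rm sing}$ on the new singular points of $Z/{\sim}'$, while on the remaining singular points of $Z$ it is $f_{\rm sing}$ composed with the identification on $Y_{\rm sing}$. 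Surjectivity of $\widetilde{f}_{\rm sing}$ follows from hypothesis~\ref{it:2} for the new singular points $y_C$ of $Y/{\sim}$ and from the surjectivity of $f_{\rm sing}$ (Remark~\ref{rmk_admissible_map_correspondence}) for the inherited ones. Admissibility is then automatic: hypothesis~\ref{it:1} yields $f^\nu(\text{\rm Supp}(E_{C'})) \subset \text{\rm Supp}(E_C)$ whenever $\widetilde{f}_{\rm sing}(z_{C'}) = y_C$, so the induced morphism of zero-dimensional reduced schemes $Z^\nu(E_{C'}) \longrightarrow Y^\nu(E_C)$ is finite. Applying Remark~\ref{rmk_admissible_map_correspondence} produces the desired finite surjective cover $g$, and $g \circ q_Z = q_Y \circ f$ holds by construction of the amalgams.

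The \'{e}taleness and Galois claims come from a local analysis and descent. Over a singular point $\bar{y}$ of $Y/{\sim}$ coming from $y \in Y_{\rm sing}$ not in any ${\sim}$-class, the completed local ring at $\bar{y}$ coincides with that at $y$ and the local fibre of $g$ matches that of $f$, so $g$ is \'{e}tale there by assumption. Over a new singular point $y_C$ with $C \in \mathcal{C}_{\sim}$, hypothesis~\ref{it:2} sets up, for each preimage class $C' \subset f^{-1}(C)$, a bijection $f|_{C'} \colon C' \longrightarrow C$; combined with \'{e}taleness of $f$ at each point of $C$, a direct completed-local-ring computation through the seminormal amalgamation identifies $\widehat{\mathcal{O}}_{Z/{\sim}', z_{C'}}$ as the standard \'{e}tale pullback of $\widehat{\mathcal{O}}_{Y/{\sim}, y_C}$. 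The refined \'{e}tale claim for a point $z$ whose ${\sim}'$-class is $f$-\'{e}tale at every point follows from the same computation applied to that class. For the Galois claim, the compatibility of the $G$-action with ${\sim}'$ descends the action to $Z/{\sim}'$ with $g$ equivariant, and simple transitivity on the generic geometric fibres of $g$ is inherited from $f$ because the identifications affect only finitely many closed points. I expect the main technical obstacle to be the completed-local-ring computation at the new singular points, where hypothesis~\ref{it:2} is essential in matching the seminormal singularity types on the two sides and in producing an \'{e}tale rather than merely flat cover over $y_C$.
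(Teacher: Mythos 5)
Your proposal follows essentially the same route as the paper: both realize $Y/{\sim}$ and $Z/{\sim}'$ as RS curves over the unchanged normalizations $Y^\nu$, $Z^\nu$, descend $f$ to an admissible pair $(f^\nu, g_{\rm sing})$ and obtain $g$ from the universal property of the cocartesian diagram (equivalently, Remark~\ref{rmk_admissible_map_correspondence}), then verify \'{e}taleness at the new singular points by the local-ring computation $\mathcal{O}_{Z/{\sim}',z'} = k + J(\bigcap \mathcal{O}_{Z^\nu,u})$ matched against the corresponding ring downstairs via hypothesis~\ref{it:2}, and descend the $G$-action by equivariance. The argument is correct and matches the paper's proof in all essentials.
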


\begin{proof}
Consider the normalization maps $Y^\nu \longrightarrow Y$ and $Z^\nu \longrightarrow Z$. Let $\{D_i\}_{i \in I}$ (respectively, $\{E_j\}_{j \in J}$) be the set of effective reduced divisors with pairwise disjoint support on $Y^\nu$ (respectively, on $Z^\nu$) defining $Y$ (respectively, $Z$). Then $Y/{\sim}$ is obtained from $Y^\nu$ as an RS curve together with the set of reduced effective divisors $\{D_{i'}\}_{i' \in I'} \sqcup \{D_C \coloneqq \sum_{y \in C} y\}_{C \in \mathcal{C}_{{\sim}}}$ where $I'$ is the indexing set obtained from $I$ by removing $i \in I$ such that $D_i = \nu^{-1}(y)$ for some $y \in C$, \, $C \in \mathcal{C}_{{\sim}}$. Then we have $Y/{\sim} = Y^\nu \sqcup_{Y'} \left(Y/{\sim} \right)_{\text{\rm sing}}$ (see \eqref{eq_identification_amalgam}). Similarly, we have $Z/{\sim}' = Z^\nu \sqcup_{Z'} \left( Z/{\sim}' \right)_{\text{\rm sing}}$ for a closed subscheme $Z' \subset Z^\nu$ associated to an effective reduced divisor on $Z^\nu$.

Let $f \colon Z \longrightarrow Y$ be a finite cover that is \'{e}tale over the singular points of $Y$. Then $f$ corresponds to an admissible finite cover $(f^\nu, f_{\text{ \rm sing}})$ as in Definition~\ref{def_morphism_RS}. Here $f^{\nu} \colon Z^\nu \longrightarrow Y^\nu$ is a finite cover of smooth projective $k$-curves and $f_{\text{ \rm sing}}$ is a surjection of the singular loci. Under the assumptions on $f$, the map $f_{\text{ \rm sing}}$ induces a surjection $g_{\text{ \rm sing}} \colon \left( Z/{\sim}' \right)_{\text{\rm sing}} \longrightarrow \left( Y/{\sim} \right)_{\text{\rm sing}}$. Moreover, the pair $(f^\nu, g_{\text{ \rm sing}})$ satisfies the condition for an admissible finite cover of RS curves between $Z/{\sim}'$ and $Y/{\sim}$. By the universal property of cocartesian diagrams, we obtain a finite cover $g \colon Z/{\sim}' \longrightarrow Y/{\sim}$.

By our construction, the pullback of $g$ via $Y \longrightarrow Y/{\sim}$ is the cover $f$. So the covers $g$ and $f$ agree away from the points in $C$ \, ($C \in \mathcal{C}_{{\sim}'}$). In particular, if $f$ is \'{e}tale at such a point, then so is $g$.

Now suppose that $z \in C$ for some $C \in \mathcal{C}_{{\sim}'}$. Set $D \in \mathcal{C}_{{\sim}}$ to be the class containing $f(z)$. Suppose that $f$ is \'{e}tale at all points in the class containing $z$. So we have
\begin{equation}\label{eq_1}
\mathcal{O}_{Z,u} \cong \mathcal{O}_{Y,f(u)}
\end{equation}
for all $u \in C$. Suppose that $z' \in Z/{\sim}'$ and $y' \in Y/{\sim}$ be the images of $z$ and $f(z)$ under the respective identification maps. Then we have
\begin{eqnarray*}
\mathcal{O}_{Z/{\sim}',z'} = k + J\left(\bigcap_{\substack{c \in C \\ u \in \nu^{-1}(c)}} \mathcal{O}_{Z^\nu, u}\right) \hspace{2 em} \text{and}\\
\mathcal{O}_{Y/{\sim},y'} = k + J\left(\bigcap_{\substack{d \in D \\ v \in \nu^{-1}(d)}} \mathcal{O}_{Y^\nu, v}\right)
\end{eqnarray*}
As $|C| = |D|$ by our assumption, the isomorphisms in \eqref{eq_1} induce an isomorphism $\mathcal{O}_{Z/{\sim}', z'} \cong \mathcal{O}_{Y/{\sim}, y'}$. So the map $g$ is \'{e}tale at $z$.

Finally, if $f$ is $G$-Galois and the $G$-action preserves the relations, the map $g$ obtained using the universal property of cocartesian diagrams is also $G$-Galois.
\end{proof}

\section{Case of Projective Curves}\label{sec_proj}
\subsection{Main Results in Projective Case}\label{sec_Main_proj}
Throughout this section, let $k$ be an algebraically closed field of a prime characteristic $p$. We fix the following notation.

\begin{notation}\label{not_set_up}
Let $X$ be a connected projective $k$-curve (reduced, but not necessarily irreducible). Suppose that $X_{\text{Sing}} = \{x_1, \ldots, x_l\}$, \, $l \geq 1$, is the singular locus of $X$. Let $\nu \colon X^\nu = \sqcup_{1\leq i \leq n} C_i \longrightarrow X$ be the normalization map where $C_i$'s are the irreducible components of $X^\nu$. The finite birational morphism $\nu$ factors as
$$X^\nu \longrightarrow X^{+} \longrightarrow X$$
where $X^{+}$ is the seminormalization (Definition~\ref{def_seminormal_curves}) of $X$.
\end{notation}

Our goal is to relate the \'{e}tale fundamental group $\pi_1(X)$ of the projective curve $X$ with $\pi_1(X^\nu) = \pi_1(C_1) \ast \cdots \ast \pi_1(C_n)$. We first observe the following result.

\begin{lemma}\label{lem_same_pi_for_semi-normal}
Under the above notation, $X^{+}$ is a connected projective $k$-curve and we have
$$\pi_1(X) \cong \pi_1(X^{+}).$$
\end{lemma}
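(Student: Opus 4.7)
The plan is to show that the seminormalization morphism $\sigma \colon X^+ \longrightarrow X$ is a universal homeomorphism, and then to invoke the topological invariance of the finite \'{e}tale site.

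First I would verify that $X^+$ is a connected projective $k$-curve. Since $X$ is reduced and projective over $k$, it is excellent and its normalization $\nu \colon X^\nu \to X$ is finite; the factorization $X^\nu \longrightarrow X^+ \longrightarrow X$ of $\nu$ through the seminormalization then forces $\sigma$ to be finite, because $\sigma_*\mathcal{O}_{X^+}$ sits inside the coherent $\mathcal{O}_X$-module $\nu_*\mathcal{O}_{X^\nu}$. Hence $X^+$ is projective over $k$. The subintegrality of each local extension $\mathcal{O}_{X,x} \subset \mathcal{O}_{X,x}^+$, built into the definition of the seminormalization recalled immediately after Definition~\ref{def_semi_normal}, says that the induced map on spectra is a bijection inducing isomorphisms on residue fields. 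Globalizing, $\sigma$ is a bijection of underlying topological spaces, so $X^+$ is homeomorphic to the connected space $X$, and is therefore itself connected.

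Second, I would observe that $\sigma$ is a universal homeomorphism. A finite morphism that is surjective, bijective on points, and induces isomorphisms on residue fields is integral, radicial, and surjective, which is one of the standard characterizations of a universal homeomorphism; see \cite[\href{https://stacks.math.columbia.edu/tag/04DF}{Tag 04DF}]{SP}.

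The desired isomorphism $\pi_1(X) \cong \pi_1(X^+)$ then follows from the topological invariance of the finite \'{e}tale site, which asserts that pullback along a universal homeomorphism of schemes induces an equivalence of Galois categories $\text{\rm FEt}_X \longrightarrow \text{\rm FEt}_{X^+}$; see \cite[Exp~IX, Th\'{e}or\`{e}me~4.10]{SGA1} or \cite[\href{https://stacks.math.columbia.edu/tag/04DZ}{Tag 04DZ}]{SP}. This equivalence intertwines the two fibre functors (for compatible choices of geometric base points) and hence identifies the associated automorphism groups, yielding the claimed isomorphism of profinite groups. The only non-formal ingredient is recognizing $\sigma$ as a universal homeomorphism, and this is essentially immediate from the defining subintegrality condition for the seminormalization, so I do not anticipate any substantial obstacle.
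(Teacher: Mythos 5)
Your proposal is correct and follows essentially the same route as the paper: both identify the seminormalization map as a subintegral, hence integral, radicial and surjective morphism, conclude via \cite[\href{https://stacks.math.columbia.edu/tag/04DF}{Lemma 04DF}]{SP} that it is a universal homeomorphism, and then invoke topological invariance of the \'{e}tale site (the paper cites \cite[Exp.~IV, Theorem~18.1.2]{EGA}, which is the same statement as your SGA~1 / Stacks reference). The only cosmetic difference is that you deduce projectivity of $X^+$ from finiteness of $\sigma$ over the projective $X$, while the paper deduces it from projectivity of $X^\nu$; both are fine.
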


\begin{proof}
Note that $X^+ \longrightarrow X$ is a subintegral map, i.e. for each open affine set $U = \text{\rm Spec}(A) \subset X$ with its preimage $V = \text{\rm Spec}(B) \subset X^+$, the map $V \longrightarrow U$ is a homeomorphism of the underlying topological spaces that induces isomorphisms of residue fields (\cite[Theorem~2.9(1), page 448]{Vitulli}). In particular, the map $X^+ \longrightarrow X$ is a radical morphism. As $X$ is connected, so is $X^+$. Since the normal curve $X^\nu$ is projective, $X^+$ is also projective.

The normalization map $\nu$ is a finite surjective morphism. So $X^+ \longrightarrow X$ is a surjective integral and radical morphism. By \cite[\href{https://stacks.math.columbia.edu/tag/04DF}{Lemma 04DF}]{SP}, the map $X^+ \longrightarrow X$ is a universal homeomorphism. Now the result follows from \cite[Exp. IV, Theorem~18.1.2, page 110]{EGA}.
\end{proof}

In view of the above lemma, we further assume that \emph{$X$ is seminormal.}

Now consider a complete discrete valuation ring $R$ with residue field $k$ and fraction field $K$ of characteristic $0$. Let $\bar{K}$ be an algebraic closure of $K$. By Proposition~\ref{prop_lift_seminormal}, there us a connected $R$-curve $X_R$, the map $X_R \longrightarrow \text{\rm Spec}(R)$ is flat, proper with geometrically connected fibres. We obtain the specialization map (\cite[Exp.10]{SGA1} or \cite[\href{https://stacks.math.columbia.edu/tag/0BUP}{Section 0BUP}]{SP})
$$\text{\rm sp} \colon \pi_1(X_{\bar{K}}) \longrightarrow \pi_1(X)$$
of Grothendieck that is a continuous homomorphism. Since $X$ is a reduced curve, the map $\text{\rm sp}$ is surjective by \cite[\href{https://stacks.math.columbia.edu/tag/0C0P}{Lemma 0C0P}]{SP}. Without lose of generality, we may assume (see the argument in \cite[Remark~1.7]{Obus}) that $\bar{K}=\mathbb{C}$. Then the above surjection becomes
\begin{equation}\label{eq_surj}
\text{\rm sp} \colon \pi_1(X_{\mathbb{C}}) \twoheadrightarrow \pi_1(X).
\end{equation}
Further, by Proposition~\ref{prop_lift_seminormal}, $X_{\mathbb{C}}$ is obtained from its normalization $X_{\mathbb{C}}^\nu = \sqcup_{1\leq i \leq n} D_i$ via identifying points the same way $X$ is obtained from $X^\nu$ (here $D_i$'s are the irreducible components of $X^\nu$), and $g(C_i) = g(D_i)$ for each $i$. By the general version of the Riemann Existence Theorem (\cite[Exp XII; Corollary~5.2]{SGA1}), $\pi_1(X_{\mathbb{C}})$ is the profinite completion of the topological fundamental group $\pi_1^{\text{\rm top}}(X_{\mathbb{C}}(\mathbb{C}))$.

We have a complete description of the group $\pi_1(X_{\mathbb{C}})$ as follows. By Remark~\ref{rmk_factorization}, the normalization map $\nu_{\mathbb{C}} \colon X_{\mathbb{C}}^\nu \longrightarrow X_{\mathbb{C}}$ factors as a composition
$$X_{\mathbb{C}}^\nu = Y_t \longrightarrow Y_{t-1} \longrightarrow \cdots \longrightarrow Y_1 \longrightarrow Y_0 = X_{\mathbb{C}}$$
where each $Y_i$ is a seminormal projective $\mathbb{C}$-curve and $Y_i$ is obtained from $Y_{i+1}$ by identifying two distinct closed points $y_{i+1,1}$ and $y_{i+1,2}$. The underlying topological space $Y_{i}(\mathbb{C})$ is a deformation retract of the space $U_{i+1}$ obtained from $Y_{i+1}(\mathbb{C})$ by joining $y_{i+1,1}$ to $y_{i+1,2}$ via a contractible arc (i.e. a `handle'). Then $\pi_1^{\text{\rm top}}(Y_i) = \pi_1^{\text{\rm top}}(U_{i+1})$. Using Van Kampen's Theorem, $\pi_1^{\text{\rm top}}(U_{i+1}) = \pi_1^{\text{\rm top}}(Y_{i+1}(\mathbb{C})) \ast \mathbb{Z}$ if the points $y_{i+1,1}$ and $y_{i+1,2}$ lie in a connected component, and $\pi_1^{\text{\rm top}}(U_{i+1}) = \pi_1^{\text{\rm top}}(Y_{i+1}(\mathbb{C}))$ otherwise. We conclude that $\pi_1(X_{\mathbb{C}})$ is a free product of finitely generated profinite groups
$$\pi_1(X_{\mathbb{C}}) = \pi_1(D_1) \ast \cdots \ast \pi_1(D_n) \ast \widehat{F_{\delta}} = \pi_1(X^\nu_{\mathbb{C}}) \ast \widehat{F_{\delta}},$$
and $F_{\delta}$ is a free finitely generated group on $\delta \coloneqq 1-n+\sum_{1 \leq i \leq l} \left( |\nu^{-1}(x_i)| -1 \right)$ generators, recursively defined as follows. Set $F_t$ to be the trivial group. If the two points in $Y_{i+1}$ getting identified under $Y_{i+1} \longrightarrow Y_i$ lie in the same connected component of $Y_{i+1}$, set $F_{i} \coloneqq F_{i+1} \ast \mathbb{Z}$. Otherwise, set $F_i \coloneqq F_{i+1}$. If $g_i = g(D_i)$ denote the genus of the compact Riemann surface $D_i$, $1 \leq i \leq n$, it is well known that the \'{e}tale fundamental group $\pi_1(D_i)$ is a the profinite completion of a finitely generated group on $2g_i$ generators $a_1, b_1, \ldots, a_{g_i}, b_{g_i}$ subject to the only relation $\prod_{1 \leq j \leq g_i} \left[ a_j, b_j \right] =1$.

As a consequence of the surjective specialization homomorphism $\text{\rm sp}$ in~\eqref{eq_surj} and the description of the group $\pi_1(X_{\mathbb{C}})$ as above, $\pi_1(X)$ is a finitely generated group of rank at most $2 \sum_{1 \leq i \leq n} g_i + \delta$\, (recall that $g(C_i) = g_i = g(D_i)$), and
$$\delta = 1-n+\sum_{1 \leq i \leq l} \left( |\nu^{-1}(x_i)| -1 \right).$$
In particular, $\pi_1(X)$ is completely determined by its finite group quotients (as a topological group; \cite[Theorem~1.1]{FG}).

We further see that there is a surjective homomorphism
$$\pi_1(C_1) \ast \cdots \ast \pi_1(C_n) \ast \widehat{F_{\delta}} \twoheadrightarrow \pi_1(X)$$
of finitely generated profinite groups. Recall that we obtained a surjective specialization map $\text{\rm sp} \colon \pi_1(X_{\mathbb{C}}) \twoheadrightarrow \pi_1(X)$ in~\eqref{eq_surj}. In terms of covers, the existence of the specialization map is equivalent to: the functor $\text{\rm FEt}_{X_R} \longrightarrow \text{\rm FEt}_X$ taking covers to their special fibres is an equivalence of categories (with the quasi inverse $\theta$, say). Then we obtain a functor
$$\lambda \colon \text{\rm FEt}_X \overset{\theta}{\underset{\sim} \longrightarrow} \text{\rm FEt}_{X_R} \longrightarrow \text{\rm FEt}_{X_{\mathbb{C}}}.$$
The surjectivity of $\text{\rm sp}$ is equivalent to: $\lambda$ maps connected finite \'{e}tale covers of $X$ to connected finite \'{e}tale covers of $X_{\mathbb{C}}$.

Now let $f \colon Z \longrightarrow X$ be a $G$-Galois \'{e}tale cover of connected $k$-curves. Using the surjection $\text{\rm sp}$ in~\eqref{eq_surj}, $G$ is generated by its subgroups $G_1, \ldots, G_n$ and $H$ with $F_\delta \twoheadrightarrow H$ and $\pi_1(D_i) \twoheadrightarrow G_i$ (as before, $D_i$'s are the irreducible components of $X_{\mathbb{C}}^\nu$). So the $G$-Galois cover $\nu^*(\lambda(f))$ is a disjoint union of $G_i$-Galois \'{e}tale connected covers of $D_i$'s. We have the following $2$-commutative diagram of Galois categories.
\begin{center}
\begin{equation*}\label{diag_2_comm}
\begin{tikzcd}
\lambda^\nu \colon \text{\rm FEt}_{X^\nu} \arrow{r}{\sim} & \text{\rm FEt}_{X_R^\nu} \arrow{r} & \text{\rm FEt}_{X^\nu_{\mathbb{C}}} \\
\lambda \colon \text{\rm FEt}_X \arrow{r}{\sim} \arrow{u}{\nu^*} & \text{\rm FEt}_{X_R} \arrow{r} \arrow{u}{\nu^*} & \text{\rm FEt}_{X_{\mathbb{C}}} \arrow{u}{\nu^*}
\end{tikzcd}
\end{equation*}
\end{center}
As $\lambda^{\nu}(\nu^*(f)) = \nu^*(\lambda(f))$, the $G$-Galois cover $\nu^*(f)$ is also a disjoint union of $G_i$-Galois \'{e}tale connected covers of $C_i$'s. This shows that each $G_i$ is in fact a finite quotient of $\pi_1(C_i)$. As the profinite groups in question are finitely generated, we obtain a surjective homomorphism
\begin{equation}\label{eq_one_way}
\pi_1(C_1) \ast \cdots \ast \pi_1(C_n) \ast \widehat{F_{\delta}} \twoheadrightarrow \pi_1(X).
\end{equation}

We have the following structure theorem for the fundamental group $\pi_1(X)$ in terms of its normalizer and singular points.

\begin{theorem}\label{thm_main_projective}
Let $k$ be an algebraically closed field of arbitrary characteristic. Let $X$ be a connected projective $k$-curve. Let $X_{\rm sing} = \{x_1, \ldots, x_l\}$. Let $\nu \colon X^\nu = \sqcup_{1\leq i \leq n} C_i \longrightarrow X$ be the normalization map where $C_i$'s are the irreducible components of $X^\nu$. Then the \'{e}tale fundamental group $\pi_1(X)$ is the following profinite group.
$$\pi_1(X) = \pi_1(C_1) \ast \cdots \ast \pi_1(C_n) \ast \widehat{F_\delta}$$
where $\widehat{F_\delta}$ is the profinite completion of a free group $F_\delta$ on
$$\delta \coloneqq 1-n+\sum_{1 \leq i \leq l} \left( |\nu^{-1}(x_i)| -1 \right)$$
generators. In particular, $\pi_1(X)$ is a finitely generated profinite group on $\sum_{1 \leq i \leq n} r_i + \delta$ generators, where $r_i$ is the rank of $\pi_1(C_i)$ \, ($r_i \leq 2 g(C_i)$).
\end{theorem}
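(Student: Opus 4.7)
The plan is to reduce via Lemma~\ref{lem_same_pi_for_semi-normal} to the case where $X$ is seminormal. The discussion preceding the theorem already produces one direction, namely a surjection
$$\Phi \colon \pi_1(C_1) \ast \cdots \ast \pi_1(C_n) \ast \widehat{F_\delta} \twoheadrightarrow \pi_1(X)$$
of finitely generated profinite groups, coming from the specialization map \eqref{eq_surj} and the topological computation of $\pi_1(X_{\mathbb{C}})$ via Van Kampen. My plan is to construct a surjection in the opposite direction and then invoke the Hopfian property of finitely generated profinite groups to conclude that $\Phi$ is an isomorphism.

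For the reverse surjection, I would use the factorization of Remark~\ref{rmk_factorization},
$$X^\nu = Y_t \longrightarrow Y_{t-1} \longrightarrow \cdots \longrightarrow Y_0 = X,$$
in which each step contracts two distinct closed points of a seminormal curve. Proceeding by downward induction on $i$, the aim is to pass from a connected Galois cover of $Y_{i+1}$ to a connected Galois cover of $Y_i$ with possibly enlarged Galois group, so as to realize every finite quotient of $\pi_1(X^\nu) \ast \widehat{F_\delta}$ as the Galois group of a connected cover of $X$. Concretely, I would invoke Propositions~\ref{prop_main_1} and~\ref{prop_main_2}, corresponding to whether the two identified points lie in distinct components of $Y_{i+1}$ or in a common component, to promote a connected $G$-Galois cover of $Y_{i+1}$ to a connected $G$- (respectively, $G \ast \mathbb{Z}$-quotient) Galois cover of $Y_i$. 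This is done by choosing a suitable $G$-equivariant identification of the fibres above the two points being collapsed and descending via Lemma~\ref{lem_cover_by_identification}. The bookkeeping of how these identifications affect connected components of the cover is precisely what produces the free factor $\widehat{F_\delta}$ with the prescribed $\delta$, mirroring the Van Kampen computation for $X_{\mathbb{C}}$.

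The hard part will be carrying out the construction in Propositions~\ref{prop_main_1} and~\ref{prop_main_2} so that the output is a \emph{connected} Galois cover of $Y_i$: one must arrange the equivalence relation on the fibres to satisfy the compatibility conditions \eqref{it:1} and \eqref{it:2} of Lemma~\ref{lem_cover_by_identification}, while simultaneously keeping the descended cover connected and equipping it with the expected Galois action. When the additional $\mathbb{Z}$-factor appears (the case where both identified points lie in the same component), a choice of coset representative must be made to introduce a new generator, and one must verify that this generator contributes an independent free factor rather than collapsing into the relations already present in $G$.

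Finally, composing the reverse surjection with $\Phi$ gives a continuous surjective endomorphism of the finitely generated profinite group $\pi_1(C_1) \ast \cdots \ast \pi_1(C_n) \ast \widehat{F_\delta}$. Since topologically finitely generated profinite groups are Hopfian (a consequence of \cite[Theorem~1.1]{FG}), this endomorphism is an isomorphism, which forces $\Phi$ to be injective, hence the asserted isomorphism. The count $\sum_i r_i + \delta$ for the rank then follows from the known bounds $r_i \le 2g(C_i)$ in the smooth projective case.
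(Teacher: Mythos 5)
Your proposal follows essentially the same route as the paper: reduce to the seminormal case via Lemma~\ref{lem_same_pi_for_semi-normal}, use the specialization map and the topological computation of $\pi_1(X_{\mathbb{C}})$ for the surjection~\eqref{eq_one_way}, build the reverse surjection by inductively applying Propositions~\ref{prop_main_1} and~\ref{prop_main_2} along the factorization of Remark~\ref{rmk_factorization}, and conclude from the fact that a finitely generated profinite group admitting surjections in both directions is determined by its finite quotients (your Hopfian formulation is equivalent to the paper's appeal to \cite[Theorem~1.1]{FG}). The approach and all key ingredients match the paper's proof.
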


\begin{proof}
We assume that $\text{\rm char}(k) > p$ and $X_{\text{\rm sing}} \neq\emptyset$ since the statement is known otherwise. In view of Lemma~\ref{lem_same_pi_for_semi-normal}, we may assume that $X$ is a seminormal curve. By our previous discussion, $\pi_1(X)$ is a finitely generated group of rank at most $2 \sum_{1 \leq i \leq n} g(C_i) + \delta$, and hence it is completely determined as a topological group by its finite quotients.

The normalization map $\nu$ admits a factorization (see \eqref{eq_factorization})
$$\nu \colon X^\nu = Y_t \longrightarrow Y_{t-1} \longrightarrow \cdots \longrightarrow Y_1 \longrightarrow Y_0 = X$$
where each $Y_i$ is a seminormal curve, and $Y_i$ is obtained from $Y_{i+1}$ by identifying two distinct closed points. By inductively applying the next two propositions, we show that any finite quotient of $\pi_1(C_1) \ast \cdots \ast \pi_1(C_n) \ast \widehat{F_{\delta}}$ is also a finite quotient of $\pi_1(X)$, and hence determine a surjection
$$\pi_1(X) \twoheadrightarrow \pi_1(C_1) \ast \cdots \ast \pi_1(C_n) \ast \widehat{F_{\delta}}.$$
We already have a surjective homomorphism $\pi_1(C_1) \ast \cdots \ast \pi_1(C_n) \ast \widehat{F_{\delta}} \twoheadrightarrow \pi_1(X)$ in~\eqref{eq_one_way}, and this concludes the proof of the theorem.
\end{proof}

\begin{proposition}\label{prop_main_1}
Let $Y$ be a seminormal projective $k$-curve, $y_1, \, y_2 \in Y$ be two closed points. Suppose that $X$ is the seminormal curve obtained from $Y$ by identifying $y_1$ with $y_2$ (see \eqref{eq_identify}). Assume that $Y$ is connected. Let $\Gamma$ be a finite group generated by a subgroup $G$ and an element $\gamma \in \Gamma$. Let $f \colon Z \longrightarrow Y$ be a $G$-Galois cover of connected projective $k$-curves, \'{e}tale over $Y_{\text{\rm sing}} \cup \{y_1, y_2\}$. Then there exists a $\Gamma$-Galois cover $g \colon W \longrightarrow X$ of connected projective $k$-curves, \'{e}tale over $X_{\text{\rm sing}}$.
\end{proposition}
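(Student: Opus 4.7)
\emph{Plan.} The strategy is to induce the cover $f$ from $G$ up to $\Gamma$ and then glue the $[\Gamma:G]$ resulting connected components together via a $\Gamma$-equivariant matching of the fibres over $y_1$ and $y_2$, twisted by $\gamma$.

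First I will set $m = [\Gamma:G]$ and fix left coset representatives $\gamma_1 = 1, \gamma_2, \ldots, \gamma_m$ of $G$ in $\Gamma$, and form the induced cover
$$\tilde{Z} \coloneqq \Ind_G^\Gamma Z \;=\; \bigsqcup_{i=1}^m \gamma_i Z,$$
equipped with the standard $\Gamma$-action $\delta \cdot (\gamma_i, z) = (\gamma_j, g z)$ whenever $\delta \gamma_i = \gamma_j g$ with $g \in G$. This yields a $\Gamma$-Galois cover $\tilde{f} \colon \tilde{Z} \longrightarrow Y$ of seminormal projective $k$-curves, \'etale over $Y_{\text{\rm sing}} \cup \{y_1, y_2\}$, with exactly $m$ connected components each isomorphic to $Z$.

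Next I will fix lifts $z_1, z_2 \in Z$ of $y_1, y_2$; because $\tilde{f}$ is \'etale at the $y_i$, each fibre $\tilde{f}^{-1}(y_i)$ is a free $\Gamma$-set. I then define an equivalence relation ${\sim}'$ on $\tilde{Z}$ whose only non-trivial classes are the pairs
$$\bigl\{ \delta \cdot (1, z_1),\; \delta\gamma \cdot (1, z_2) \bigr\}, \qquad \delta \in \Gamma.$$
These pairs exhaust $\tilde{f}^{-1}(\{y_1, y_2\})$ and are permuted by the $\Gamma$-action on $\tilde{Z}$ by construction. Let ${\sim}$ be the relation on $Y$ identifying $y_1$ with $y_2$, so that $Y/{\sim} = X$ as in \eqref{eq_identify}. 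I then apply Lemma~\ref{lem_cover_by_identification} to $(\tilde{f}, {\sim}', {\sim})$: the relations are compatible under $\tilde{f}$, every preimage class has size $2 = |\{y_1, y_2\}|$, and the $\Gamma$-action commutes with ${\sim}'$. The lemma produces a $\Gamma$-Galois cover $g \colon W \coloneqq \tilde{Z}/{\sim}' \longrightarrow X$; \'etaleness over the new singular point $x_0$ follows from the ``moreover'' clause, since $\tilde{f}$ is \'etale at every member of each ${\sim}'$-class, while \'etaleness over the remaining singular points of $X$ is inherited from $\tilde{f}$.

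The one substantive point, and the main obstacle, is to show $W$ is connected. The components of $\tilde{Z}$ are labelled by the cosets $\alpha G \in \Gamma/G$, and each identification $\delta \cdot (1, z_1) \sim' \delta\gamma \cdot (1, z_2)$ glues the component indexed by $\delta G$ to the component indexed by $\delta\gamma G$. Hence the ``gluing graph'' on $\Gamma/G$ has an edge between $\alpha G$ and $\beta G$ exactly when $\alpha^{-1}\beta \in G\gamma G$, and it suffices to check that the subgroup $H \coloneqq \langle G\gamma G \rangle \leq \Gamma$ equals $\Gamma$. Clearly $\gamma = 1 \cdot \gamma \cdot 1 \in H$, and for any $g \in G$ one has $g = (g \cdot \gamma \cdot 1)(1 \cdot \gamma^{-1} \cdot 1) \in H$ (using $\gamma^{-1} \in (G\gamma G)^{-1} \subseteq H$). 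Thus $H \supseteq \langle G, \gamma \rangle = \Gamma$, so the gluing graph is connected and $W$ is connected. This is the sole step where the hypothesis $\Gamma = \langle G, \gamma \rangle$ is used in an essential way.
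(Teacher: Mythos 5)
Your construction is correct and follows the same overall strategy as the paper's proof: induce $f$ from $G$ up to $\Gamma$, identify the fibres over $y_1$ and $y_2$ by a $\Gamma$-equivariant pairing, and invoke Lemma~\ref{lem_cover_by_identification} to descend to a $\Gamma$-Galois cover of $X$ that is \'{e}tale over $X_{\text{\rm sing}}$. Where you genuinely diverge is the connectivity step, and your version is the more robust of the two. The paper takes the specific coset representatives $e, \gamma, \ldots, \gamma^{N-1}$ and glues the fibre of $Z_i$ over $y_1$ to the fibre of $Z_{i+1 \bmod N}$ over $y_2$, stringing the components along a single cycle so that connectedness is immediate; but this rests on the assertion that the powers of $\gamma$ form a full set of coset representatives of $G$ in $\Gamma$, which does not follow from $\Gamma = \langle G, \gamma \rangle$ alone (take $\Gamma = S_3$, $G = \langle (12) \rangle$, $\gamma = (13)$: here $[\Gamma : G] = 3$ while $\gamma$ has order $2$). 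Your formulation --- arbitrary representatives, the torsor parametrization $\delta \mapsto \delta \cdot (1, z_a)$ of the two fibres, and the observation that the gluing graph on $\Gamma/G$ is connected because $\langle G\gamma G \rangle \supseteq \langle G, \gamma \rangle = \Gamma$ --- sidesteps this and isolates exactly where the hypothesis on $\gamma$ enters. Two points you leave implicit but should record: $Z$, and hence $\tilde{Z}$, is seminormal because $f$ ramifies only over smooth points of $Y$ (so the local rings at ramified points are discrete valuation rings and those at \'{e}tale points are isomorphic to the ones downstairs), which is needed before the quotient $\tilde{Z}/{\sim}'$ of \eqref{eq_identify} and Lemma~\ref{lem_cover_by_identification} can be applied; and the verification that each component of $\tilde{Z}$ actually meets a nontrivial ${\sim}'$-class (it does, since every coset $\delta G$ arises as the index of the component containing $\delta \cdot (1, z_1)$), which is what lets connectedness of the gluing graph imply connectedness of $W$.
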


\begin{proposition}\label{prop_main_2}
Let $Y$ be a seminormal projective $k$-curve, $y_1, \, y_2 \in Y$ be two closed points. Suppose that $X$ is the seminormal curve obtained from $Y$ by identifying $y_1$ with $y_2$ (see \eqref{eq_identify}). Assume that $Y$ is a union of two distinct connected components $Y_1$ and $Y_2$ containing the points $y_1$ and $y_2$, respectively. Let $G$ be a finite group generated by two of its subgroups $G_1$ and $G_2$. Suppose that for $a = 1, \, 2$, \, $f_a \colon Z_a \longrightarrow Y_a$ be a $G_i$-Galois cover of connected projective $k$-curves, \'{e}tale over $\left( Y_a \right)_{\text{\rm sing}} \cup \{y_a\}$. Then there exists a $G$-Galois cover $g \colon W \longrightarrow X$ of connected projective $k$-curves, \'{e}tale over $X_{\text{\rm sing}}$.
\end{proposition}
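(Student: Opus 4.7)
The plan is to mirror the strategy of Proposition~\ref{prop_main_1}: for each $a = 1, 2$ I induce the $G_a$-cover $f_a$ up to a $G$-cover, assemble a disconnected $G$-Galois cover of $Y$, then descend to a cover of $X$ via Lemma~\ref{lem_cover_by_identification} by identifying points in a $G$-equivariant fashion.

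For $a = 1, 2$, I form the induced cover $\widetilde Z_a \coloneqq \Ind_{G_a}^G Z_a \longrightarrow Y_a$, which is a $G$-Galois cover consisting of $[G:G_a]$ disjoint copies of $Z_a$ indexed by the left cosets in $G/G_a$ and \'{e}tale over the same locus as $f_a$. Assembling them yields a $G$-Galois cover $f \colon Z \coloneqq \widetilde Z_1 \sqcup \widetilde Z_2 \longrightarrow Y_1 \sqcup Y_2 = Y$ that is \'{e}tale over $Y_{\text{\rm sing}} \cup \{y_1, y_2\}$. Fix base points $\tilde z_a \in f_a^{-1}(y_a)$, and let $w_a \in \widetilde Z_a$ denote the image of $\tilde z_a$ in the component indexed by the trivial coset $G_a$. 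Since $f$ is \'{e}tale at $y_a$, the $G$-action on the fibre $f^{-1}(y_a)$ is free and transitive, so $\{g \cdot w_a : g \in G\}$ enumerates that fibre as $|G|$ distinct points. I then define $\sim$ on $Y$ to have unique non-trivial class $\{y_1, y_2\}$ (so $Y/{\sim} = X$), and $\sim'$ on $Z$ to have non-trivial classes $\{g \cdot w_1,\, g \cdot w_2\}$ for $g \in G$; these form $|G|$ pairwise disjoint pairs partitioning $f^{-1}(\{y_1, y_2\})$. One verifies that $\sim'$ preserves $f$, that the size partition condition of Lemma~\ref{lem_cover_by_identification} holds (each pair has size $2 = |\{y_1, y_2\}|$), and that the $G$-action on $Z$ commutes with $\sim'$ by construction. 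Lemma~\ref{lem_cover_by_identification} therefore produces a $G$-Galois cover $g \colon W \coloneqq Z/{\sim'} \longrightarrow X$ that is \'{e}tale at every singular point of $X$, because $f$ is \'{e}tale at every point of every $\sim'$-class.

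The hard part will be showing $W$ is connected. The connected components of $Z$ are naturally indexed by $G/G_1 \sqcup G/G_2$, and each identification $g \cdot w_1 \sim' g \cdot w_2$ glues the component labeled $gG_1$ in $\widetilde Z_1$ to the component labeled $gG_2$ in $\widetilde Z_2$. Thus $W$ is connected if and only if the bipartite graph $\mathcal{G}$ on the vertex set $G/G_1 \sqcup G/G_2$ with edges $\{\{gG_1,\, gG_2\} : g \in G\}$ is connected. Induction on word length shows that the component of $\mathcal{G}$ containing $G_1$ consists precisely of the cosets $xG_1$ and $xG_2$ with $x$ ranging over the subgroup $\langle G_1, G_2 \rangle$; since $\langle G_1, G_2 \rangle = G$ by hypothesis, every coset is reached, so $\mathcal{G}$ is connected and hence so is $W$.
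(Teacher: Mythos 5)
Your construction of $W$ is the same as the paper's: induce each $f_a$ up to a $G$-Galois cover $\Ind_{G_a}^{G} Z_a$, take the disjoint union over $Y = Y_1 \sqcup Y_2$, identify the two fibres over $y_1$ and $y_2$ in a $G$-equivariant way by matching orbit labels of a fixed pair of base points, and descend through Lemma~\ref{lem_cover_by_identification}. (One small omission: before invoking that lemma you should record, as in the proof of Proposition~\ref{prop_main_1}, that the $Z_a$ are themselves seminormal because $f_a$ ramifies only over smooth points of $Y_a$.) Where you genuinely diverge is the connectedness of $W$, which you rightly flag as the main point. The paper chooses coset representatives of $G/G_a$ lying inside $G_{3-a}$ and argues that \emph{every} component of $\Ind_{G_1}^{G} Z_1$ gets glued to \emph{every} component of $\Ind_{G_2}^{G} Z_2$; this implicitly uses $G = G_1 G_2$, and such representatives need not exist when $\langle G_1, G_2\rangle = G$ but $G \neq G_2 G_1$ (e.g.\ $G = S_3$ with $G_1$, $G_2$ generated by two distinct transpositions, where $[G:G_1] = 3 > |G_2|$). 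Your reduction to the connectedness of the bipartite coset graph on $G/G_1 \sqcup G/G_2$ with edges $\{gG_1, gG_2\}$, proved by induction on the length of alternating words in $G_1$ and $G_2$, uses only the hypothesis $\langle G_1, G_2\rangle = G$ and is the more robust argument: it does not claim that every pair of components is directly glued, only that the gluing graph is connected, which is exactly what is needed and what is true in general.
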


Before proving the above propositions, we make some observation on the Galois action on the induced covers. Let $X$ be any projective connected $k$-curve, and let $f \colon Z \longrightarrow X$ be a $G$-Galois cover of connected $k$-curves that is \'{e}tale over the singular locus $X_{\text{\rm sing}}$ of $X$. This implies that the image of the singular points of $Z$ are mapped to singular points of $X$, and the fibre of $f$ over any singular point of $X$ consists only of singular points of $Z$. Fix a closed point $x \in X$ in the \'{e}tale locus of $f$. Then $G$ acts simply transitively on the finite set of points $f^{-1}(x)$ in $Z$. In other words, the closed subscheme $f^{-1}(x)$ of $Z$ is a principal homogeneous space for the finite constant $k$-group scheme $G$ over $\text{\rm Spec}(k)$.

We have the following group theoretic observation. For any finite group $G$ and a finite set $S$, there is a bijective correspondence
\begin{equation}\label{eq_rep}
\begin{Bmatrix} \text{ A simply transitive }G\text{-action on }S \\ \text{ together with an element } s_0 \in S \end{Bmatrix} \overset{\sim} \longrightarrow \begin{Bmatrix} \text{ A bijection } \alpha \colon S \longrightarrow G \end{Bmatrix}
\end{equation}
defined by $(\Lambda \colon G \times S \longrightarrow S) \, \mapsto \, \alpha(s) = g$ where $g \in G$ is the unique element $\Lambda(g,s)=s_0$ obtained by the transitivity of $\Lambda$ (Here $\alpha^{-1}(g) = \Lambda(g,s_0)$). The inverse map of the above correspondence is given by $\alpha \mapsto \left( \Lambda, \alpha^{-1}(e)\right)$, \, $e$ is the identity element of $G$, \, $\Lambda(g, s) = \alpha^{-1} \left( g \cdot \alpha(s) \right)$ where $g \cdot \alpha(s)$ is obtained by the regular action of $G$ on itself.

Now suppose that $\Gamma$ is a finite group generated by its subgroups $G$ and $H$. Consider the induced (disconnected) $\Gamma$-Galois cover $f^{\text{ \rm ind}} \colon \text{\rm Ind}_G^{\Gamma} Z \longrightarrow X$. Then $f^{\text{ \rm ind}}$ is also \'{e}tale over $X_{\text{\rm sing}}$. Since $f$ is \'{e}tale over the point $x \in X$, so is the cover $f^{\text{ \rm ind}}$. We study the $\Gamma$-action on the fibre $\left( f^{\textbf{ \rm ind}} \right)^{-1}(x)$ in detail. Fix a point $z_0 \in f^{-1}(x)$.

Fix a set of coset representatives $C(\Gamma/G) = \{e=h_0, h_1, \ldots, h_{N-1}\}$ of the left coset $\Gamma/G$ where $N = [\Gamma \colon G]$ and $h_i \in H$. Then $\text{\rm Ind}_G^{\Gamma} Z = \sqcup_{0 \leq i \leq N-1} Z_i$ is a disjoint union where each $Z_i$ is a copy of $Z$ indexed by $h_i \in C(\Gamma/G)$. Let $z_0^{(i)} \in Z_i$ be the unique point over $z_0 \in Z$ under the above identification $Z_i \cong Z$. As $f$ is \'{e}tale over $x$, for each $0 \leq i \leq N-1$, the simply transitive $G$-action on $f^{-1}(x)$ and the point $z_0^{(i)}$ determines a unique bijection by~\eqref{eq_rep}:
$$\alpha_i \colon \left(f^{\text{ \rm ind}}\right)^{-1}(x) \cap Z_i \overset{{\sim}} \longrightarrow G.$$
These define a bijection
\begin{equation}\label{eq_bij}
\alpha \colon \left(f^{\text{ \rm ind}}\right)^{-1}(x) \longrightarrow \Gamma
\end{equation}
as follows. For $z \in \left(f^{\text{ \rm ind}}\right)^{-1}(x)$, there is a unique $0 \leq i \leq N-1$ such that $z \in \left(f^{\text{ \rm ind}}\right)^{-1}(x) \cap Z_i$. Set $\alpha(z) \coloneqq h_{i} \alpha_i(z)$. It has the inverse map given by: $\gamma \in \Gamma$ can be written uniquely as $\gamma = h_i g$ for $i$ and $g \in G$; set $\alpha^{-1}(\gamma) \coloneqq \alpha_i^{-1}(g) = g \cdot z_0^{(i)}$. By \eqref{eq_rep}, this bijection corresponds to the point $\alpha^{-1}(e) = z_0^{(0)} \in Z_0$ and the $\Gamma$-action $\Lambda$ on $\left( f^{\text{ \rm ind}} \right)^{-1}(x)$ given by
$$\Lambda(\gamma , z) = \alpha^{-1} \left( \gamma \cdot \alpha(z) \right)$$
where $\gamma \cdot \alpha(z)$ is obtained from the regular $G$-action on itself. More precisely, let $\gamma = h_i g, \,z \in Z_j$. Then $h_i g \alpha_j(z) = h_{i'}g'$ for some uniquely determined $i'$ and $g' \in G$. We have $\Lambda(\gamma , z) = g' \cdot z_0^{(i')}$ determined by the $G$-action on $f^{-1}(x)$.

\begin{proof}[{\underline{Proof of Proposition~\ref{prop_main_1}}}]
Let $x$ be the image of $y_1$ (and of $y_2$) under the identification map $Y \longrightarrow X$. By our hypothesis, $f$ is possibly ramified only over a finitely many smooth points of $X$. For any closed point $z \in Z$ over which $f$ is \'{e}tale, we have $\mathcal{O}_{Z,z} \cong \mathcal{O}_{Y,f(z)}$. If $f$ is ramified at a point $z \in Z$, then $f(z)$ is a smooth point of $X$; so the local ring extension $\mathcal{O}_{X,x} \subset \mathcal{O}_{Z,z}$ is necessarily an extension of discrete valuation domains. Thus $Z$ is also a seminormal curve (see Definition~\ref{def_seminormal_curves}).

Let $N = [\Gamma \colon G]$. Then $C(\Gamma/G) = \{e = \gamma^0, \gamma, \ldots, \gamma^{N-1}\}$ is a set of coset representatives for the left coset $\Gamma/G$. Consider the (possibly disconnected) $\Gamma$-Galois \'{e}tale cover
$$f^{\text{ \rm ind}} \colon \text{Ind}_G^{\Gamma} Z = \sqcup_{0 \leq i \leq N-1} Z_i \longrightarrow Y$$
where each $Z_i$ is a copy of $Z$ indexed by $\gamma^i$. Let $a \in \{1,2\}$. Fix a point $z_a \in f^{-1}(y_a)$, and let $z_a^{(i)} \in Z_i$ denote its image under the isomorphism $Z_i \cong Z$. We have the corresponding bijections ($0 \leq i \leq N-1$)
$$\alpha_{a,i} \colon \left( f^{\text{ \rm ind}} \right)^{-1}(y_a) \cap Z_i \overset{{\sim}} \longrightarrow G.$$
Recall from \eqref{eq_bij} that the above bijections define a bijection $\alpha_a \colon \left( f^{\text{ \rm ind}} \right)^{-1}(y_a) \longrightarrow \Gamma$ that determine the $\Gamma$-action on $\left( f^{\text{ \rm ind}} \right)^{-1}(y_a)$.

We define an equivalence relation ${\sim}$ on $\text{\rm Ind}_G^{\Gamma} Z$ whose non-trivial classes are obtained using the following rule.
\begin{itemize}
\item A point $z \in \left( f^{\text{ \rm ind}} \right)^{-1}(y_1) \cap Z_i$ is identified with a point $z' \in \left( f^{\text{ \rm ind}} \right)^{-1}(y_2) \cap Z_{i+1 \pmod{N}}$ if and only if $\alpha_{1,i}(z) = \alpha_{2,i+1 \pmod{N}}(z')$.
\end{itemize}
Let $W$ be the seminormal curve $\left( \text{\rm Ind}_G^{\Gamma} Z \right)/{\sim}$ (see \eqref{eq_identify}); the finite birational morphism $\beta \colon \text{\rm Ind}_G^{\Gamma} Z \longrightarrow W$ is an isomorphism away from the set of points $S \coloneqq \left( f^{\text{ \rm ind}} \right)^{-1}\left( \{y_1, y_2\} \right)$. By our construction, the equivalence relations satisfy the hypothesis of~\eqref{it:1} and~\eqref{it:2} of Lemma~\ref{lem_cover_by_identification}. Moreover, $f$ is \'{e}tale over both $y_1$ and $y_2$. As $\alpha_{a,i}(g \cdot z_a^{(i)}) = g$ for any $g \in G, \, 0 \leq i \leq N-1$, we have
$$z_1^{(i)} {\sim} z_2^{(i+1 \pmod{N})} \text{ and for any } g \in G, \, g \cdot z_1^{(i)} {\sim} z_2^{(i+1 \pmod{N})}.$$
Thus $f$ induces a $\Gamma$-Galois cover $g \colon W \longrightarrow X$ that is \'{e}tale over $X_{\text{\rm sing}}$. Since for each $0 \leq i \leq N-1$, the set $\{z \in \text{\rm Ind}_G^{\Gamma} Z \, | \,  z \text{ lies in a non-trivial equivalence class of } {\sim}\} \cap Z_i$ is non-empty and $Z$ is connected, $W$ is a connected seminormal curve.
\end{proof}

\begin{proof}[{\underline{Proof of Proposition~\ref{prop_main_2}}}]
Let $x \in X$ be the image of $y_1$ ( and of $y_2$) under the identification map $Y \longrightarrow X$. It also follows that $Z_1$ and $Z_2$ are seminormal curves. Let $a \in \{1, 2\}$. Since $G = \langle \, G_1, G_2 \, \rangle$, we choose a set of coset representatives $C(G/G_a) = \{e=g_{3-a,0}, g_{3-a,1}, \ldots, g_{3-a,n_a-1}\}$ for the left coset $G/G_a$ where $n_a = [G \colon G_a]$ and each $g_{3-a,i_a} \in G_{3-a}$. Consider the induced (possibly disconnected) $\Gamma$-Galois cover
$$f_a^{\text{ \rm ind}} \colon \text{\rm Ind}_{G_a}^G Z_a = \sqcup_{0 \leq i_a \leq n_a-1} Z_{a,i_a} \longrightarrow Y_a,$$
\'{e}tale over $\left( Y_a \right)_{\text{\rm sing}} \cup \{y_a\}$; each $Z_{a,i_a}$ is a copy of $Z_a$ indexed by $g_{3-a,i_a} \in C(G/G_a)$. Fix a point $z_a \in f_a^{-1}(y_a)$. We obtain bijections (see \eqref{eq_bij})
$$\alpha_a \colon \left( f_a^{\text{ \rm ind}} \right)^{-1}(y_a) \overset{{\sim}} \longrightarrow G$$
that determines the $G$-action on the set $\left( f_a^{\text{ \rm ind}} \right)^{-1}(y_a)$.

We have the following (possibly disconnected) $G$-Galois cover
$$\phi \colon f_1^{\text{ \rm ind}} \sqcup f_2^{\text{ \rm ind}} \colon Z' \coloneqq \text{\rm Ind}_{G_1}^G Z_1 \sqcup \text{\rm Ind}_{G_2}^G Z_2 \longrightarrow Y_1 \sqcup Y_2$$
that is \'{e}tale over $Y_{\text{\rm sing}} \cup \{y_1, y_2\}$. Consider the equivalence relation ${\sim}$ on the closed points of $Z'$ where the non-trivial classes are given as follows.
\begin{itemize}
\item A point $z_1 \in \left( f_1^{\text{ \rm ind}} \right)^{-1}(y_1)$ is identified with a point $z_2 \in \left( f_2^{\text{ \rm ind}} \right)^{-1}(y_2)$ if and only if $\alpha_1(z_1) = \alpha_2(z_2)$.
\end{itemize}
In other words, if $g \in G$, then $g$ can be uniquely written as $g = g_{1,i} g_2 = g_{2,j} g_1, \, g_1 \in G_1, \, g_2 \in G_2, \, 0 \leq i \leq n_2, \, 0 \leq j \leq n_1$. We identify the point $\alpha_{1,j}^{-1}(g_1)$ on $Z_{1,j}$ with the point $\alpha_{2,i}^{-1}(g_2)$ on $Z_{2,i}$. Let $\beta \colon Z' \longrightarrow W \coloneqq Z'/{\sim}$ be the identification map. It is easy to check that the conditions of Lemma~\ref{lem_cover_by_identification} are satisfied, and we obtain a $G$-Galois cover $W \longrightarrow X$, \'{e}tale over $X_{\text{\rm sing}}$.

Now we show that $W$ is connected. It is enough to show that for any $0 \leq i \leq n_1-1$ and any $0 \leq j \leq n_2-1$, there exist points $z_1 \in Z_{1,j}, \, z_2 \in Z_{2,i}$ that are identified via ${\sim}$. Consider any point $z_1 = \alpha_{1,j}^{-1}(g_1)$ on $Z_{1,j}$. Then $g \coloneqq g_{2,j} g_1$ can be written uniquely as $g = g_{1,i} g_2$, and $z_1$ is identified with $z_2 \coloneqq \alpha_{2,i}^{-1}(g_2)$ on $Z_{2,i}$.
\end{proof}

\begin{remark}\label{rmk_preserving_inertia}
The above proofs also show the following. Let $y \in Y$ be any smooth point, $y \neq y_1, \, y_2$. Let $x$ denote the smooth point in $X$ obtained as the image of $y$ under the identification map $Y \longrightarrow X = Y/{y_1 {\sim} y_2}$. If $I_y$ is an inertia group over $y$ for the cover $f$ in Proposition~\ref{prop_main_1} (or for either of the covers $f_1$ and $f_2$ in Proposition~\ref{prop_main_2}), then $I_y$ is also an inertia group above $x$ for the cover $W \longrightarrow X$ (in either of the propositions). So the above constructions do not change the ramification behavior above any point.
\end{remark}

\begin{remark}\label{rmk_categorical}
In Theorem~\ref{thm_main_projective}, we have
$$\pi_1(X) = \pi_1(C_1) \ast \cdots \ast \pi_1(C_n) \ast \widehat{F_\delta} = \pi_1(X^\nu) \ast \widehat{F_\delta}.$$
In particular, we obtain the injective homomorphism $\pi_1(X^\nu) \hookrightarrow \pi_1(X)$. The corresponding exact functor of the Galois categories is the pullback of finite \'{e}tale covers of $X$ under the normalization map. Similarly, each surjection $\pi_1(X) \longrightarrow \pi_i(C_i)$ corresponds to the exact fully faithful (\cite[\href{https://stacks.math.columbia.edu/tag/0BN6}{Lemma 0BN6}]{SP}) functor $\text{\rm FEt}_{C_i} \longrightarrow \text{\rm FEt}_{X}$. These are precisely the maps obtained (for connected objects in the categories) in Proposition~\ref{prop_main_1} and Proposition~\ref{prop_main_2}. So the above propositions describe a canonical way (up to the choices of coset representatives and of one point in each fibre of covers over the points getting identified) to obtain all the Galois covers of any singular curve $X$ starting from the Galois covers of its normalization.
\end{remark}

\subsection{Necessary Conditions on Galois groups}\label{sec_nec}
We retain the notation from our previous section. As application of our main theorem~\ref{thm_main_projective}, we have the following necessary conditions for any finite quotient of $\pi_1(X)$. These are known when $X$ is smooth (see \cite{Stevenson} or \cite[Section 7]{survey_paper}).

\begin{corollary}\label{cor_nec}
Let $X$ be a connected singular curve over an algebraically closed field $k$ of characteristic $p > 0$. Let $\delta$ and $g_i = g(C_i)$ ($1 \leq i \leq n$) be as in Theorem~\ref{thm_main_projective}. Let $X^+$ be the seminormalization of $X$.
\begin{enumerate}
\item The specialization map $\text{\rm sp} \colon \pi_1(X^+_{\mathbb{C}}) \twoheadrightarrow \pi_1(X)$ obtained by composing the surjection~\eqref{eq_surj} with the isomorphism $\pi_1(X^+) \cong \pi_1(X)$ of Lemma~\ref{lem_same_pi_for_semi-normal} induces an isomorphism of the maximal $p'$-quotients
$$\pi_1(X^+_{\mathbb{C}})^{(p')} \twoheadrightarrow \pi_1(X)^{(p')}.$$\label{item-1}
\item The maximal pro-$p$ quotient $\pi_1(X)^{(p)}$ is a free profinite group on $\sum_{1 \leq i \leq n} s_i + \delta$ generators; here $s_i$ denote the $p$-rank of $C_i$.\label{item-2}
\end{enumerate}

Further, suppose that a finite group $G$ occurs as the Galois group of a Galois \'{e}tale connected cover of $X$. Let $I_G$ denote the augmented ideal
$$\left\{\sum_{g \in G} a_g g \, | \, \sum_g a_g = 0 \right\}$$
of the group ring $k[G]$. Let $t_G$ be the minimal number of generators of $I_G$. Let $\sigma(G)$ be the $p$-rank of the abelianization $G_{\text{\rm ab}}$ of $G$. Then $G$ satisfies the following properties.
\begin{enumerate}[resume]
\item (Nakajima's Condition for singular curves)\footnote{For integral semistable curves, this can be proved using the original ideas of \cite{Nakajima}; in fact, Theorem~3 and Theorem~4 of loc. cit. holds in this set up. This insight is missing for general curves as the Riemann Roch Theorem and related notions are known when the curve is Gorenstein.} $t_G \leq \sum_{1 \leq i \leq n} g_i + \delta$.\label{item-3}
\item (Hasse-Witt for singular curves) $\sigma(G) \leq \sum_{1 \leq i \leq n} g_i+\delta$.\label{item-4}
\end{enumerate}
\end{corollary}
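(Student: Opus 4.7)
All four assertions should follow directly from the structure theorem
$$\pi_1(X) \;\cong\; \pi_1(C_1) \ast \cdots \ast \pi_1(C_n) \ast \widehat{F_\delta}$$
of Theorem~\ref{thm_main_projective}, combined with the classical facts for smooth projective curves applied componentwise and the obvious information about the free factor $\widehat{F_\delta}$.

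The plan for \eqref{item-1} and \eqref{item-2} is to exploit that forming the maximal pro-$\ell$ quotient commutes with free products of profinite groups: any continuous map from a free product into a pro-$\ell$ group factors uniquely through the pro-$\ell$ completions of the factors. Together with the isomorphism $\pi_1(X^+) \cong \pi_1(X)$ of Lemma~\ref{lem_same_pi_for_semi-normal}, this reduces \eqref{item-1} to Grothendieck's classical statement that specialization is a pro-$p'$ isomorphism on each smooth component, while being the identity on the $\widehat{F_\delta}$-factor. Similarly \eqref{item-2} reduces to Shafarevich's theorem ($\pi_1(C_i)^{(p)}$ is free pro-$p$ of rank $s_i$) and the fact that $\widehat{F_\delta}^{(p)}$ is free pro-$p$ of rank $\delta$, noting that a pro-$p$ coproduct of free pro-$p$ groups is free pro-$p$ on the disjoint union of bases.

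For \eqref{item-4} I would abelianize directly:
$$\pi_1(X)^{\mathrm{ab}} \;\cong\; \bigoplus_{i=1}^n \pi_1(C_i)^{\mathrm{ab}} \;\oplus\; \widehat{\mathbb{Z}}^{\,\delta},$$
whose pro-$p$ part has $\mathbb{Z}_p$-rank equal to $\sum_i s_i + \delta$. Since $G_{\mathrm{ab}}$ is a quotient of $\pi_1(X)^{\mathrm{ab}}$, its $p$-rank $\sigma(G)$ is bounded by $\sum_i s_i + \delta$, and the classical Hasse--Witt inequality $s_i \leq g_i$ on each smooth component yields the stated bound.

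The main obstacle is \eqref{item-3}, since the Nakajima bound concerns the $k[G]$-module $I_G$ rather than just the group $G$ or its abelianization. My plan is to pull the connected $G$-Galois \'etale cover $Y \to X$ back along the normalization to obtain an \'etale cover $Y^\nu \to X^\nu$ which splits componentwise; on each $C_i$ it has Galois group $G_i := \mathrm{im}\bigl(\pi_1(C_i) \to G\bigr)$, and the smooth-case Nakajima bound gives $t_{G_i} \leq g_i$. The core task is to produce a surjection of $k[G]$-modules
$$\bigoplus_{i=1}^n \mathrm{Ind}_{G_i}^{G}\, I_{G_i} \;\oplus\; k[G]^{\oplus \delta} \;\twoheadrightarrow\; I_G,$$
reflecting the free-product structure of $\pi_1(X)$: the subgroups $G_i$ contribute their augmentation ideals through induction, while the factor $\widehat{F_\delta}$ contributes $\delta$ extra cyclic generators, one per ``handle''. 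Granting such a surjection, the bound $t_G \leq \sum_i t_{G_i} + \delta \leq \sum_i g_i + \delta$ is immediate. I expect to establish the surjection via a Mayer--Vietoris-type sequence for the cocartesian diagram~\eqref{RS_diagram} defining $X$, applied to the sheaf $f_* \mathcal{O}_Y$ for $f: Y \to X$, transferring the decomposition of $H^0/H^1$ along the singular identifications into a module-theoretic generation statement for $I_G$.
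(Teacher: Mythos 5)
Your treatment of \eqref{item-1}, \eqref{item-2} and \eqref{item-4} is correct and essentially the paper's own argument: the paper checks by hand, via finite quotients, that maximal pro-$p$ and pro-$p'$ quotients are compatible with free products of finitely generated profinite groups, then quotes Grothendieck's specialization isomorphism for \eqref{item-1} and Shafarevich for \eqref{item-2}; your abelianization argument for \eqref{item-4} is an equivalent repackaging of the paper's factorization through the maximal elementary abelian $p$-quotient, using $s_i \leq g_i$ at the end in the same way.

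The gap is in \eqref{item-3}. You correctly isolate the surjection $\bigoplus_i \mathrm{Ind}_{G_i}^{G} I_{G_i} \oplus k[G]^{\oplus \delta} \twoheadrightarrow I_G$ as the statement that would finish the proof, but you do not establish it, and the route you propose --- a Mayer--Vietoris-type sequence for the cocartesian diagram applied to $f_*\mathcal{O}_Y$ --- is precisely the cohomological/Riemann--Roch approach that the paper's own footnote flags as unavailable here: Nakajima's geometric argument requires the curve to be Gorenstein, and a general seminormal curve need not be. Fortunately no geometry is needed. Theorem~\ref{thm_main_projective} gives a surjection $\pi_1(C_1)\ast\cdots\ast\pi_1(C_n)\ast\widehat{F_\delta}\twoheadrightarrow G$, so $G$ is generated by subgroups $H_1,\dots,H_n$ (each a finite quotient of the corresponding $\pi_1(C_i)$, whence $t_{H_i}\leq g_i$ by the smooth-case bound, \cite[Corollary~1.4]{Stevenson}) together with a subgroup $H$ on at most $\delta$ generators. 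The identity $gh-1=g(h-1)+(g-1)$ shows that whenever $G=\langle H_1,\dots,H_n,H\rangle$ one has $I_G=\sum_i k[G]\cdot I_{H_i}+k[G]\cdot I_H$, which is exactly your surjection; combined with $t_H\leq d_H\leq\delta$ (\cite[Proposition~2.1(i)]{Stevenson}) this yields $t_G\leq\sum_i t_{H_i}+t_H\leq\sum_i g_i+\delta$, which is how the paper argues. Note also that your subgroups $G_i=\mathrm{im}\bigl(\pi_1(C_i)\to G\bigr)$ are only pinned down (up to conjugacy) once you have the free-product decomposition of $\pi_1(X)$, or at least the surjection~\eqref{eq_one_way}; the pullback of the cover to $C_i$ alone gives no reason for the $G_i$ together with $\delta$ further elements to generate $G$, and that generation statement is the real content you need before any module-theoretic manipulation.
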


\begin{proof}
We first note that if $\Gamma$ is a free product of finitely generated profinite groups $\Gamma_1$ and $\Gamma_2$, then there is an isomorphism $\Gamma^{(p')} \cong \left( \Gamma_1^{(p')} \ast \Gamma_2^{(p')} \right)^{(p')}$ of maximal $p'$-quotients and an isomorphism $\Gamma^{(p)} \cong \left( \Gamma_1^{(p)} \ast \Gamma_2^{(p)} \right)^{(p)}$ of maximal pro-$p$ quotients. If $G$ is a finite quotient of $\Gamma_1^{(p')} \ast \Gamma_2^{(p')}$ of order prime-to-$p$, then $G$ is also a quotient of $\Gamma$ by the normal subgroup generated by the kernels of the maps $\Gamma \twoheadrightarrow \Gamma_i$. Conversely, suppose that $\Gamma^{(p')} \twoheadrightarrow G$ be a finite quotient. Then $G$ admits generators which are images of generators of $\Gamma$ and satisfies the relations in $\Gamma$. Consider the subgroups $G_1$ and $G_2$ of $G$ that are generated by those elements which are images of generators of $\Gamma_1$ and $\Gamma_2$, respectively. These generators necessarily satisfy the relations in respective $\Gamma_i$'s. So $G_i$ is a finite quotient of $\Gamma_i$ and is of order prime-to-$p$. Since $\Gamma_i$'s are finitely generated, we see the first isomorphism of maximal $p'$-quotients. The other isomorphism follows similarly by considering finite quotients which are $p$-groups.

In view of the above, \eqref{item-1} follows from the isomorphism induced by the specialization map in smooth case (\cite[Exp XIII, Corollary~2.12, page 392]{SGA1}). Similarly, since $\pi_1(Y)^{(p)}$ for a smooth projective $k$-curve $Y$ is a free group on $s_Y$ ($= p\text{\rm -rank of } Y$) generators (\cite{Shafarevitch}), \eqref{item-2} follows.

Now we prove \eqref{item-3}. By Theorem~\ref{thm_main_projective}, $G$ is generated by its subgroups $H_1, \ldots, H_n$ and $H$ (which may be trivial) such that $H_i$ a finite quotient of $\pi_1(C_i)$ and $H$ is a finite group on $\leq \delta$ generators. Then $t_G \leq \sum_i t_{G_i} + t_H$. By \cite[Corollary~1.4]{Stevenson}, we have $t_{H_i} \leq g_i$. We also have $t_H \leq \delta$ by \cite[Proposition~2.1(i)]{Stevenson}. Thus $t_G \leq \sum_i g_i +\delta$.

For \eqref{item-4}, suppose that $f \colon Z \longrightarrow X$ be a $G$-Galois \'{e}tale cover of connected $k$-curves. Let $A$ denote the maximal elementary abelian $p$-quotient of $G$. Then $f$ factors through a connected $A$-Galois cover $Y \longrightarrow X$. Then by \eqref{item-2}, the number of generators for $A$ ($= \sigma(G)$ by definition) is bounded above by $\sum_{1 \leq i \leq n} s_i + \delta \leq \sum_{1 \leq i \leq n} g_i + \delta$.
\end{proof}

\section{Case of Affine Curves}\label{sec_affine}
\subsection{Main Results for affine case}\label{sec_Main_Affine}
In this section, we study the Galois covers of an affine $k$-curve $U$. As before, $k$ is an algebraically closed field of characteristic $p > 0$. We also assume that $U$ is irreducible.

Even when $U$ is smooth, the \'{e}tale fundamental group $\pi_1(U)$ is not finitely generated; so understanding its finite quotients does not fully determine the structure of the profinite group $\pi_1(U)$. Nevertheless, it is important to understand the finite continuous quotients. Abhyankar's Conjecture on Affine Curves (now a theorem) states that for a smooth irreducible affine $k$-curve $U$ with its smooth projective completion $U \subset X$ and $r = |X - U|$, a group $G$ is a finite continuous quotient of $\pi_1(U)$ if and only if $G/p(G)$ has a generating set of at most $2 g(X) + r -1$ elements. We seek a similar statement when $U$ is a singular curve.

Embed $U$ in a connected projective curve $X$ such that $B \coloneqq X - U$ consists of smooth points. Set $r \coloneqq |B|$. Consider the normalization map $\nu \colon X^{\nu} \longrightarrow X$. Then $X^\nu$ is the unique (up to isomorphism) smooth projective connected $k$-curve with function field $k(X) = k(U)$. In general, the pullback divisor $\nu^*(x)$ on $X^{\nu}$ for a closed point $x \in X$ need not be reduced. Let $X^+$ be the seminormal curve obtained from $X^{\nu}$ together with the set of effective reduced divisors $\{\nu^*(x)_{\text{ \rm red}} \coloneqq \nu^{-1}(x)\}_{x \in U_{\rm sing}}$ (see \eqref{eq_sminormal_is_RS}). Since $U$ is connected, $X^+$ is also connected. The map $\nu$ factors as a composition
$$\nu \colon X^\nu \overset{\nu^+} \longrightarrow X^+ \overset{\eta} \longrightarrow X.$$
Then $B$, $B^+ \coloneqq \eta^{-1}(B)$ and $\nu^{-1}(B)$ are all isomorphic sets of $r$-many closed smooth points in the respective curves. We make the following useful note.

\begin{remark}\label{rmk_r}
Set $U^\nu = \nu^{-1}(U)$. The genus $g$ of the curve $X^\nu$, the number $r = |\nu^{-1}(B)| = |X^\nu - U^\nu|$ and the number 
$$\delta \coloneqq \sum_{u \in U_{\text{\rm sing}}} \left( |\nu^{-1}(u)| -1 \right)$$
are all uniquely determined by the curve $U$.
\end{remark}

By construction, $U^+$ is a seminormal affine $k$-curve, and $\eta$ restricts to a finite surjective morphism $\eta|_{U^+} \colon U^+ \longrightarrow U$ which is also subintegral (for each point $u \in U$, there is a unique point in $\eta^{-1}(u)$ having the same residue field). In particular, $\eta|_{U^+}$ is a radical morphism. By \cite[\href{https://stacks.math.columbia.edu/tag/04DF}{Lemma 04DF}]{SP}, it is a universal homeomorphism. By \cite[Theorem~18.1.2, page 110]{EGA}, we have the following isomorphism of \'{e}tale fundamental groups.
\begin{equation}\label{eq_iso_pi}
\pi_1(U) \cong \pi_1(U^+).
\end{equation}

\emph{In whatever follows, we assume that $U$ is seminormal.}

Now suppose that $\pi_1(U) \twoheadrightarrow G$ be a finite continuous quotient. This uniquely corresponds to a $G$-Galois cover $f \colon Y \longrightarrow X$ of connected projective $k$-curves that is \'{e}tale over $U$. Since $X$ is seminormal and the branch locus of $f$ is contained in the smooth locus of $X$, the curve $Z$ is necessarily seminormal. We need a local-global principle (see \cite[Theorem~3.1]{Obus} for a smooth curve) as follows.

\begin{proposition}\label{prop_loc_glob}
Let $R$ be a complete discrete valuation ring with residue field $k$. Under the above notation, let $X_R$ be a connected projective flat $R$-curve with special fibre $X$ obtained in Proposition~\ref{prop_lift_seminormal}. Let $B_R \subset X_R$ be the set of $r$ sections in $X_R(\text{\rm Spec}(R))$ with pairwise disjoint supports corresponding to the set $B$. Suppose that for each point $x \in B$, a subgroup $I_x \subset G$ occurs as an inertia group over $x$. Also assume that for each point $x \in B$, a point $z \in f^{-1}(x)$, the local $I_x$-Galois extension $\widehat{\mathcal{O}}_{Z,z}/\widehat{\mathcal{O}}_{X,x}$ of complete local rings lifts over $\text{\rm Spec}(R)$. Then the cover $f$ lifts to a $G$-Galois cover $f_R \colon Z_R \longrightarrow X_R$ of connected $R$-curves that is \'{e}tale away from $B_R$.
\end{proposition}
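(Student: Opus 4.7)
The plan is to mirror the strategy of the local--global principle in the smooth case (\cite[Theorem~3.1]{Obus}), replacing smooth lifts by the seminormal lift obtained in Proposition~\ref{prop_lift_seminormal} and using formal patching to assemble the global cover from an étale lift over the complement of $B_R$ together with the prescribed local lifts above $B_R$.

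First I would fix, for each $b\in B$ with section $b_R\in B_R(R)$, a preferred point $z\in f^{-1}(b)$ and use the hypothesis to obtain an $I_b$-Galois lift $\widehat{\mathcal{O}}_{Z_R,z_R}/\widehat{\mathcal{O}}_{X_R,b_R}$. Inducing from $I_b$ up to $G$ produces a local $G$-Galois cover $\widehat{Z}_{b_R}\to \operatorname{Spec}(\widehat{\mathcal{O}}_{X_R,b_R})$ whose special fibre recovers $f$ on the formal neighborhood of $b$ in $X$. These local covers form the branched part of the eventual patching datum. Note that since $b$ is a smooth point of $X$ and $b_R$ is a section, the local ring $\widehat{\mathcal{O}}_{X_R,b_R}\simeq R[[t]]$ is the standard smooth two-dimensional local ring in which the usual local lifting calculus applies.

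Second, I would lift the étale part of $f$. The open $U_R\coloneqq X_R\setminus B_R$ is flat over $R$ with special fibre $U$; it contains all singular points of $X_R$ but $f$ is étale over these. Since étale morphisms are formally smooth, $f\rvert_{f^{-1}(U)}\to U$ deforms uniquely order by order to an étale cover of $\widehat{U_R}$, the formal completion of $U_R$ along $U$; the seminormality of $U_R$ poses no obstruction because étale deformation is controlled purely by the étale site, which is insensitive to nilpotents in the base. Call the resulting formal étale $G$-Galois cover $\widehat{g}\colon \widehat{V}\to \widehat{U_R}$.

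Third, I would glue $\widehat{g}$ with the local branched covers $\widehat{Z}_{b_R}$ using formal patching (in the form of Harbater--Stevenson/Pries, or equivalently Grothendieck's existence theorem applied to the formal Mayer--Vietoris cover $\widehat{X_R}=\widehat{U_R}\cup\bigcup_{b\in B}\operatorname{Spec}(\widehat{\mathcal{O}}_{X_R,b_R})$). The compatibility of the patching data on the overlaps, which are punctured formal discs, follows because on each such overlap both the étale lift and the induced local cover restrict to the pullback of $f$, and both are determined up to unique isomorphism by this common restriction on the special fibre together with the étale property in the generic direction. The patching yields a coherent $\widehat{\mathcal{O}}_{\widehat{X_R}}$-algebra whose relative spectrum is a formal $G$-Galois cover $\widehat{f_R}\colon \widehat{Z_R}\to \widehat{X_R}$ with the required special fibre and ramification locus $\widehat{B_R}$. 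Since $X_R$ is proper over $R$, Grothendieck's existence theorem (\cite[Theorem~21.2, page 140]{Hartshorne_Def}) algebraizes $\widehat{Z_R}$ to a finite $X_R$-scheme $f_R\colon Z_R\to X_R$, and the $G$-action and Galois structure algebraize along with it. Finally, $Z_R$ is connected because its special fibre $Z=Y$ is connected and $Z_R\to\operatorname{Spec}(R)$ is proper and flat, and étaleness outside $B_R$ is preserved under algebraization.

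The main obstacle will be the patching step: verifying that the local $G$-Galois covers obtained by induction from the $I_b$-lifts actually agree with the étale lift $\widehat{g}$ on the punctured formal discs around each $b_R$. In the smooth case this follows from the Katz--Gabber-type rigidity, and the same rigidity holds here because every branch point $b$ lies in the smooth locus of $X$, so near $b$ the situation is literally the same as the smooth case. The only new ingredient is that the étale lift $\widehat{g}$ must be constructed over the seminormal $\widehat{U_R}$ rather than a smooth base; as noted above, this is handled by the étale-invariance of deformation, but one should be careful that the construction commutes with the inclusion of formal neighborhoods of $B_R$ so that the patching data matches without further adjustment.
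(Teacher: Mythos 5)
Your proposal follows essentially the same route as the paper's proof: lift the étale part of $f$ over $U_R = X_R - B_R$ by deformation theory, use the local lifting hypothesis at each smooth branch point $b \in B$ and induce from $I_b$ to $G$ to get local covers of $\operatorname{Spec}(\widehat{\mathcal{O}}_{X_R,b_R})$, match them with the étale lift on the punctured formal neighborhoods via rigidity of étale covers, and assemble everything by Harbater's formal patching (which also handles algebraization). The argument is correct and matches the paper's sketch, which likewise reduces to \cite[Theorem~3.1]{Obus} after observing that all branch points lie in the smooth locus.
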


\begin{proof}[{\underline{{Sketch of Proof}}}]
The proof is essentially the same as the proof of \cite[Theorem~3.1]{Obus} that uses formal patching. Set $U_R = X_R - B_R$. Since the cover $f$ is \'{e}tale over $U$, by \cite[Proposition~22.5, page 147]{Hartshorne_Def}, there is a $G$-Galois \'{e}tale cover $g_R \colon V_R \longrightarrow U_R$ of connected $R$-curves such that $g_R \times_R k = f|_{U}$. As each point $x \in B$ is a smooth point, $\widehat{\mathcal{O}}_{X,x}$ is a complete discrete valuation ring. By our assumption, there are $I_x$-Galois covers
$$h_x \colon S_x \longrightarrow \text{\rm Spec}\left( \widehat{\mathcal{O}}_{X_R,x} \right)$$
with respective special fibres $\text{\rm Spec}\left( \widehat{\mathcal{O}}_{Z,z} \right) \longrightarrow \text{\rm Spec} \left( \widehat{\mathcal{O}}_{X,x} \right)$. Consider the induced $G$-Galois covers
$$g_x \colon \text{\rm Ind}_{I_x}^G S_x \longrightarrow \text{\rm Spec}\left( \widehat{\mathcal{O}}_{X_R,x} \right).$$
Since the covers $h_x$ are generically \'{e}tale, and $f_R$ is \'{e}tale over $\bigcup_{x \in X} \text{\rm Spec}\left( \widehat{\mathcal{O}}_{X_R,\tilde{x}} \right)$ (here $\tilde{x}$ is the deleted formal neighborhood $\text{\rm Spec}\left( K_{X,x} \right)$ and $K_{X,x} = \text{\rm Fr}(\widehat{\mathcal{O}}_{X,x})$), there is an $G$-equivariant isomorphism
$$V_R \times_{U_R} \text{\rm Spec}\left( \widehat{\mathcal{O}}_{X_R,\tilde{x}} \right) \cong \text{\rm Ind}_{I_x}^G S_x \times_{\text{\rm Spec}\left( \widehat{\mathcal{O}}_{X_R,x} \right)} \text{\rm Spec}\left( \widehat{\mathcal{O}}_{X_R,\tilde{x}} \right)$$
of Galois \'{e}tale covers. We obtain our required $G$-Galois cover $f_R$ by applying \cite[Theorem~3.2.8]{Ha_PG}.
\end{proof}

We come back to our set up where $f \colon Y \longrightarrow X$ is a $G$-Galois cover of connected projective seminormal $k$-curves. Recall from Section~\cref{sec_notation} that the (necessarily normal) subgroup of $G$ generated by the Sylow $p$-subgroups is denoted by $p(G)$. The cover $f$ factors as as composition
$$f \colon Y \xrightarrow[]{p(G)\text{\rm -Galois}} Y' \coloneqq Y/p(G) \xrightarrow[]{G/p(G)\text{\rm -Galois}} X$$
of Galois covers. Moreover, the cover $Y' \longrightarrow X$ is \'{e}tale over $U$, of order prime-to-$p$. In particular, the inertia groups above points in $x \in B$ are cyclic groups of order prime-to-$p$. Then the extensions of complete local rings near each point $x \in B$ lifts over $\text{\rm Spec}(R)$ (\cite[Theorem~1.5, discussion preceding Theorem~3.1]{Obus}). By Proposition~\ref{prop_loc_glob}, there is a $G/p(G)$-Galois cover $Y'_R \longrightarrow X_R$ of connected $R$-curves that is \'{e}tale over $U_R$. Further assume that $K = \text{\rm Fr}(R)$ has characteristic $0$. Without lose of generality, we may assume that $\mathbb{C}$ is an algebraic closure of $K$. The generic fibre $Y'_R \times_R K \longrightarrow X_R \times_R K$ is again $G/p(G)$-Galois, \'{e}tale over $U_R \times_R K$. Its geometric fibre is a $G/p(G)$-Galois cover $Y'_{\mathbb{C}} \longrightarrow X_{\mathbb{C}}$ of connected projective $\mathbb{C}$-curves that is \'{e}tale over $U_{\mathbb{C}} \coloneqq U_R \times_R \mathbb{C}$. As $U$ is irreducible, so is $U_{\mathbb{C}}$. Moreover, $|X_{\mathbb{C}} - U_{\mathbb{C}}| = |B| = r$. Thus we can realize $G/p(G)$ as a (continuous) finite quotient
\begin{equation}\label{eq_quotient}
\pi_1(U_{\mathbb{C}}) \twoheadrightarrow G/p(G).
\end{equation}

\begin{theorem}\label{thm_main_affine}
Let $k$ be any algebraically closed field of arbitrary characteristic and $U$ be an integral affine $k$-curve. Let $X^\nu$ be the unique (up to isomorphism) smooth projective connected $k$-curve with function field $k(X^\nu) = k(U)$. Let $U^\nu \subset X^\nu$ be the maximal affine open subset together with a finite surjective morphism $\nu \colon U^{\nu} \longrightarrow U$ that is an isomorphism over the smooth locus $U_{\text{\rm smooth}} = U - U_{\text{\rm sing}}$. Suppose that $X^\nu$ has genus $g$, \, $r = |X^\nu - U^\nu|$ and set
$$\delta \coloneqq \sum_{u \in U_{\text{\rm sing}}} \left( |\nu^{-1}(u)| -1 \right).$$

A finite group $G$ occurs as a (continuous) quotient of the \'{e}tale fundamental group $\pi_1(U)$ if and only if $G/p(G)$ is generated by at most $2g+r-1+\delta$ elements.
\end{theorem}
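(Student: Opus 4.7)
The plan follows the two-sided strategy outlined in the introduction: necessity via lifting to characteristic zero and the Riemann Existence Theorem, sufficiency via the inductive construction of Propositions~\ref{prop_main_1}--\ref{prop_main_2} combined with formal patching at $B$. As a preliminary step, use~\eqref{eq_iso_pi} to reduce to $U$ seminormal and embed $U$ in a connected projective seminormal $k$-curve $X$ so that $B := X - U$ consists of $r$ smooth points; then $X$ is irreducible with $X_{\text{\rm sing}} = U_{\text{\rm sing}}$, and the invariant $\delta$ of the statement coincides with the $\delta$ attached to $X$ by Theorem~\ref{thm_main_projective} (since $n = 1$).

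For necessity, a continuous surjection $\pi_1(U) \twoheadrightarrow G$ corresponds to a $G$-Galois cover $Z \longrightarrow X$ \'{e}tale over $U$. Factoring through $Y' := Z/p(G)$ yields a $G/p(G)$-Galois cover $Y' \longrightarrow X$ \'{e}tale over $U$ whose inertia at each $b \in B$ is cyclic of order prime to $p$; hence the local extensions lift over $R$, and Proposition~\ref{prop_loc_glob} produces a $G/p(G)$-Galois lift $Y'_R \longrightarrow X_R$ \'{e}tale away from $B_R$. Passing to the geometric generic fiber (with $\bar{K} = \mathbb{C}$) gives a $G/p(G)$-Galois \'{e}tale cover of $U_{\mathbb{C}}$. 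To conclude, compute $\pi_1(U_{\mathbb{C}})$: topologically $U_{\mathbb{C}}$ is obtained from $U^\nu_{\mathbb{C}}$ (a smooth genus-$g$ complex curve with $r$ punctures, whose topological fundamental group is free on $2g+r-1$ generators) by collapsing each fiber $\nu^{-1}(u)$ to a point. An iterated Van Kampen argument, identical to the one computing $\pi_1(X_{\mathbb{C}})$ in Section~\ref{sec_Main_proj}, contributes one free generator per pairwise identification, for a total of $\delta$ extra generators; hence $\pi_1(U_{\mathbb{C}}) = \widehat{F_{2g+r-1+\delta}}$ and $G/p(G)$ is generated by at most $2g+r-1+\delta$ elements.

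For sufficiency, assume $H := G/p(G)$ is generated by $2g+r-1+\delta$ elements. Stage one constructs a tame $H$-Galois cover $W \longrightarrow X$ \'{e}tale over $U$. Realize a quotient $H_0$ of $H$ on the first $2g+r-1$ generators as an $H_0$-Galois tame cover of $U^\nu$ (possible because $H_0$ is prime to $p$, by Grothendieck's description of the prime-to-$p$ quotient of the tame fundamental group of a smooth affine curve), and extend it to $X^\nu$ with tame inertia supported on $B^\nu$. Then descend along the factorization $X^\nu = Y_\delta \longrightarrow \cdots \longrightarrow Y_0 = X$ of Remark~\ref{rmk_factorization}: every intermediate $Y_i$ is connected because $X^\nu$ is, and the two points identified at each step lie in the \'{e}tale locus of the current cover (which is everything outside $B$, by Remark~\ref{rmk_preserving_inertia}), so Proposition~\ref{prop_main_1} applies and adjoins one further generator of $H$ at each step. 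After $\delta$ steps the Galois group is exactly $H$ and the ramification above $B$ is unchanged. Stage two extends $H$ to $G$: $N := p(G)$ is a normal quasi-$p$ subgroup of $G$, and for each $b \in B$ Raynaud's local construction produces a $G$-Galois local cover of $\widehat{\mathcal{O}}_{X,b}$ whose reduction modulo $N$ matches the local piece of $W$. Harbater's formal patching theorem (\cite[Theorem~3.2.8]{Ha_PG}), applied exactly as in the proof of Proposition~\ref{prop_loc_glob}, glues these local $G$-covers with the \'{e}tale $H$-cover $W|_U \longrightarrow U$ to produce the required $G$-Galois \'{e}tale cover of $U$.

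The main technical obstacle lies in stage one of the sufficiency: one must verify at each inductive step that the cover produced by Proposition~\ref{prop_main_1} is connected and has Galois group exactly the intended enlargement $\langle H_{i-1}, h_i\rangle$ rather than an overgroup, and that it remains \'{e}tale at the next pair of points to be identified. Both points are handled by the coset-representative bookkeeping built into the proof of Proposition~\ref{prop_main_1} and by the fact that throughout the descent the cover remains \'{e}tale over all of $U$, so in particular over every pair to be identified. The patching of stage two, by contrast, is routine precisely because $B$ lies in the smooth locus of $X$, so the local data at $B$ coincide with the smooth-curve case and Harbater's patching applies verbatim.
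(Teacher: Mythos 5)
Your reduction to seminormal $U$, your necessity argument (factor through $Z/p(G)$, lift the prime-to-$p$ cover via Proposition~\ref{prop_loc_glob}, pass to $\mathbb{C}$ and compute $\pi_1(U_{\mathbb{C}})$ as the profinite completion of a free group on $2g+r-1+\delta$ generators), and your ``stage one'' (realize a prime-to-$p$ group generated by the first $2g+r-1$ generators over $U^\nu$, then adjoin one generator per identification step via Proposition~\ref{prop_main_1}) all match the paper's proof. The gap is in ``stage two.'' You set $H:=G/p(G)$, a \emph{quotient} of $G$, build an $H$-Galois tame cover $W\longrightarrow X$, and then propose to ``glue local $G$-covers with the \'{e}tale $H$-cover $W|_U$.'' Formal patching does not accept this input: to produce a connected $G$-Galois cover one must patch covers whose Galois groups are \emph{subgroups} of $G$ that together generate $G$; a cover with group a proper quotient of $G$ is not a constituent of a $G$-cover, and ``reduction modulo $N$ matches the local piece of $W$'' is not a condition the patching theorem can consume. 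What is missing is the group-theoretic lifting step: by \cite[Lemma~5.3]{Ha_AC}, the normalizer $N_G(P)$ of a Sylow $p$-subgroup $P$ contains a prime-to-$p$ subgroup $H\leq G$ with $H\twoheadrightarrow G/p(G)$, so that $G=\langle H, p(G)\rangle$. It is this subgroup $H$ (not the quotient) that must be realized in stage one, by lifting the generators $a_i$ of $G/p(G)$ to elements $h_i\in H$.

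This is not merely cosmetic, because the wild local piece depends on it. In the paper's argument the tame inertia $\langle c\rangle$ of $V_2\longrightarrow X$ at the chosen point $x\in B$ lies in $H\subseteq N_G(P)$, so $c$ normalizes $P$ and one can form $P\rtimes\langle c^{-1}\rangle$, take a Harbater--Katz--Gabber cover with this group, enlarge it to a $\langle\, p(G),\,P\rtimes\langle c^{-1}\rangle\,\rangle$-Galois cover of $\mathbb{P}^1$ by \cite[Theorem~2.1]{Ha_Embed}, and only then apply the Corollary to the Patching Theorem of \cite{Ha_Patching} to this cover and the $H$-cover $V_2$; connectedness and the correct Galois group follow because $H$ and $\langle\, p(G),\,P\rtimes\langle c^{-1}\rangle\,\rangle$ generate $G$. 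Your single sentence ``Raynaud's local construction produces a $G$-Galois local cover of $\widehat{\mathcal{O}}_{X,b}$ whose reduction modulo $N$ matches the local piece of $W$'' asserts precisely the step where all of this work lives, without the normalizer condition that makes the semidirect product and the matching possible. As written, stage two does not go through; with Harbater's Lemma~5.3 inserted and the quotient $G/p(G)$ replaced throughout by the subgroup $H$, your argument becomes the paper's.
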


\begin{proof}
We may assume that $\text{\rm char}(k) = p >0$ and $U$ is a singular curve (otherwise, the theorem is well known). Observe that the triple $(g,r,\delta)$ in the statement is the same as the triple of numbers in Remark~\ref{rmk_r}. By our previous discussion and the isomorphism~\eqref{eq_iso_pi}, we may assume that $U$ is a seminormal curve embedded in an irreducible projective seminormal curve $X$ such that the finite set $B = X - U$ of $r$ points are all smooth points of $X$. The map $\nu$ in the statement is the restriction of the normalization map $\nu \colon X^\nu \longrightarrow X$.

Suppose that $G$ occurs as a continuous finite quotient of $\pi_1(U)$. Then we have a surjection $\pi_1(U_{\mathbb{C}}) \twoheadrightarrow G/p(G)$ as in~\eqref{eq_quotient} for a similar complex affine curve $U_{\mathbb{C}}$. In particular, $U_{\mathbb{C}}$ is an irreducible seminormal curve which have the same associated triple $(g,r,\delta)$. By the general form of the Riemann Existence Theorem (\cite[Exp XII; Corollary~5.2]{SGA1}), $\pi_1(U_{\mathbb{C}})$ is the profinite completion of the topological fundamental group $\pi_1^{\text{top}}(U_{\mathbb{C}}(\mathbb{C}))$. As the projective curve $X_{\mathbb{C}}$ is obtained from its normalization via identifying two points at a time (see \eqref{eq_factorization}), we conclude that the topological fundamental group $\pi^{\text{top}}_1(U_{\mathbb{C}})$ is a finitely generate free group of the form $\pi^{\text{top}}_1(\nu^{-1}\left(U_{\mathbb{C}}\right)) \ast F_{\delta}$ where $F_{\delta}$ is a finitely generated free group on $\delta$-many generators and we again denote the normalization map $X^\nu_{\mathbb{C}} \longrightarrow X_{\mathbb{C}}$ by $\nu$. Since $\pi^{\text{top}}_1(\nu^{-1}\left(U_{\mathbb{C}}\right))$ is the free group on $2g+r-1$ generators, the forward direction follows.

For the converse, suppose that $G$ is a finite group and $G/p(G)$ has a generating set of size $m$, \, $m \leq 2g+r-1+\delta$. Let $P$ be a Sylow $p$-subgroup of $G$, thus necessarily contained in $p(G)$. Let $N$ be the normalizer subgroup $N = N_G(P)$. Consider the surjective homomorphism $\pi \colon G \twoheadrightarrow G/p(G)$. By \cite[Lemma~5.3]{Ha_AC}, $N$ contains a subgroup $H$ of order prime-to-$p$ such that $\pi(H) = G/p(G)$. In particular, $G$ is generated by $H$ and $p(G)$. Let $a_1, \, \ldots, \, a_m$ be the generators of $G/p(G)$. Choose elements $h_1, \, \ldots, \, h_m \in H$ with $\pi(h_i) = a_i$. Let $m' \coloneqq \text{\rm min}\{m, 2g+r-1\}$. Set $H_1$ to be the subgroup of $H$ generated by $h_1, \, \ldots, \, h_{m'}$. As $m' \leq 2g+r-1$ and $H_1$ is a group of order prime-to-$p$, by \cite[Exp. XIII, Corollary~2.12]{SGA1}, there is a $H_1$-Galois cover $V_1 \longrightarrow X^\nu$ of smooth connected $k$-curves, \'{e}tale away from the $r$ points in $\nu^{-1}(B)$. We apply Proposition~\ref{prop_main_1} inductively to obtain an $H$-Galois cover $V_2 \longrightarrow X$ of connected projective $k$-curves that is \'{e}tale away from the set $B$ of $r$ smooth points. Since $H$ has order prime-to-$p$, the cover $V_2 \longrightarrow X$ is at most tamely ramified. Fix a point $x \in B$, and let $\langle c \rangle \cong \mathbb{Z}/s$ be an inertia group above $x$, \, $(s,p)=1$. Then $c$ normalizes $P$. Consider the semi-direct product $P \rtimes \langle c^{-1} \rangle$. Let $g_1 \colon \mathbb{P}^1 \longrightarrow \mathbb{P}^1$ be any $P \rtimes \langle c^{-1} \rangle$-Galois (Harbater-Katz-Gabber) cover, \'{e}tale away from $\{0,\infty \}$, totally ramified over $\infty$ and $\langle c^{-1} \rangle$ occurs as an inertia group above $0$. By \cite[Theorem~2.1]{Ha_Embed}, we obtain a $\langle \, p(G), P \rtimes \langle \, c^{-1} \, \rangle \, \rangle$-Galois cover $W \longrightarrow \mathbb{P}^1$, \'{e}tale away from $\{0,\infty\}$ such that $P \rtimes \langle \, c^{-1} \, \rangle$ occurs as an inertia group above $\infty$ and $\langle \, c^{-1} \, \rangle$ occurs as an inertia group above $0$. Finally we apply \cite[Corollary to Patching Theorem]{Ha_Patching}\footnote{The same proof works in our set up as the branch point $x$ is assumed to be a smooth point.} to this cover and the $H$-Galois cover $V_2 \longrightarrow X$ to obtain a $G$-Galois cover $Z \longrightarrow X$, \'{e}tale over $U$ and $P \rtimes \langle \, c^{-1} \, \rangle$ occurs as an inertia group above $x$. This uniquely corresponds to a (continuous) surjection $\pi_1(U) \twoheadrightarrow G$.
\end{proof}

\subsection{Tame Fundamental Groups and Inertia Conjecture}\label{sec_IC}
We retain the notation of Section~\cref{sec_Main_Affine}. As in the smooth case, we would also like to get information about the tame fundamental group $\pi_1^t(U \subset X)$ when $X$ is seminormal and $B \coloneqq X - U$ consists of $r$ smooth points. Similar groups are studied in \cite{Saidi} and \cite{Wewers} when $X$ is a semistable curve and $X-U$ consists of singular points.

The finite covers of $X$ that are \'{e}tale over $U$ and tamely ramified over $B$ form a Galois Category $\text{\rm FT}_{U \subset X}$ with the obvious fibre functor, $\pi_1^t(U \subset X)$ is the automorphism group of the fibre functor, and we have an equivalence of categories $\text{\rm FT}_{U \subset X} \overset{\sim} \longrightarrow \text{\rm Finite-}\pi_1^t(U \subset X)\text{\rm -sets}$. For analogous complex curves $U_{\mathbb{C}} \subset X_{\mathbb{C}}$, we also consider the covers of $X_{\mathbb{C}}$ that are \'{e}tale over $U_{\mathbb{C}}$, and the inertia groups of order prime-to-$p$; we have the corresponding category $\text{\rm FT}_{U_{\mathbb{C}} \subset X_{\mathbb{C}}}$ and the group $\pi_1^{p\text{\rm -tame}}(U_{\mathbb{C}} \subset X_{\mathbb{C}})$. The results below are almost similar to what we have obtained so far. To avoid repetition, we give a brief sketch without going into the details.

Suppose that $f \colon Z \longrightarrow X$ is a $G$-Galois cover of connected curves that is \'{e}tale over $U$ and is tamely ramified over $B$. Using the same notation and argument for the Galois cover $f$ in the beginning of this section, we see that $f$ lifts over $\text{\rm Spec}(R)$ where $R$ is any complete discrete valuation ring with residue field $k$, the fraction field $K$ of characteristic $0$, and $\mathbb{C}$ is an algebraic closure of $K$. In particular, we obtain a $G$-Galois cover $f_{\mathbb{C}} \colon Z_{\mathbb{C}} \longrightarrow X_{\mathbb{C}}$ that is \'{e}tale over $U_{\mathbb{C}}$ and tamely ramified over $B_{\mathbb{C}}$. Further, the ramification indices are also the same for $f$ and $f_{\mathbb{C}}$. The construction of the cover also shows that the specialization functor $\text{\rm FT}_{U_R \subset X_R} \longrightarrow \text{\rm FT}_{U \subset X}$ is an equivalence of categories, and the composite functor $\text{\rm FT}_{U \subset X} \overset{\sim}\longrightarrow \text{\rm FT}_{U_R \subset X_R} \longrightarrow \text{\rm FT}_{U_{\mathbb{C}} \subset X_{\mathbb{C}}}$ sends connected objects to connected objects. Thus we also have a surjective specialization map 
\begin{equation}\label{eq_tame_sp}
\text{\rm sp}^t \colon \pi_1^{p\text{\rm -tame}}(U_{\mathbb{C}} \subset X_{\mathbb{C}}) \twoheadrightarrow \pi_1^t(U \subset X).
\end{equation}
From the usual topological argument, we again see that $\pi_1^{p\text{\rm -tame}}(U_{\mathbb{C}} \subset X_{\mathbb{C}})$ is the free product $\pi_1^{p\text{\rm -tame}}(U^\nu_{\mathbb{C}}) \ast \widehat{F_{\delta}}$. Thus $\pi_1^{p\text{\rm -tame}}(U_{\mathbb{C}} \subset X_{\mathbb{C}})$ is a free finitely generated profinite group, and the same is true for the group $\pi_1^t(U \subset X)$. In particular, $\pi_1^t(U \subset X)$ is completely determined by its finite quotients, and if the group has rank $a$, then every finite group on $\leq a$ generators is a finite quotient of it.

Further, we can use the same argument as in the projective case (replacing $\text{\rm FEt}$ by $\text{\rm FT}$ in \eqref{diag_2_comm}) to obtain a surjective homomorphism
$$\pi_1^t(U^\nu) \ast \widehat{F_{\delta}} \twoheadrightarrow \pi_1^t(U \subset X).$$
Finally, using the similar argument for the projective case---Theorem~\ref{thm_main_projective}, we see there is also a surjection in the other direction. As a consequence, we obtain the following structure theorem for the tame fundamental group.

\begin{theorem}\label{thm_tame}
Let $X$ be an irreducible seminormal $k$-curve. Let $U \subset X$ be an affine curve obtained by removing $r$ many closed points from $X$. Let $\nu \colon X^\nu \longrightarrow X$ be the normalization map. Set $U^\nu = \nu^{-1}(U)$. Then the tame fundamental group $\pi_1^t(U \subset X)$ is given as a free product of free finitely generated profinite groups as follows.
$$\pi_1^t(U \subset X) = \pi_1^t(U^\nu) \ast \widehat{F_{\delta}}.$$
\end{theorem}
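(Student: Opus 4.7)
The plan is to follow exactly the same strategy as in the proof of Theorem~\ref{thm_main_projective}, replacing finite \'etale covers with finite covers of $X$ that are \'etale over $U$ and tamely ramified over $B \coloneqq X - U$, and working throughout with the categories $\text{\rm FT}$ in place of $\text{\rm FEt}$. The discussion preceding the theorem has already assembled most of the ingredients; what remains is to carry the argument through in both directions.

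First I would use the surjective specialization map $\text{\rm sp}^t$ from \eqref{eq_tame_sp} together with the description $\pi_1^{p\text{\rm -tame}}(U_{\mathbb{C}} \subset X_{\mathbb{C}}) = \pi_1^{p\text{\rm -tame}}(U^\nu_{\mathbb{C}}) \ast \widehat{F_{\delta}}$ (obtained via the Riemann Existence Theorem and the Van Kampen computation from the projective case). This immediately implies that $\pi_1^t(U \subset X)$ is a finitely generated profinite group, hence completely determined by its continuous finite quotients. It therefore suffices to exhibit surjections in both directions between $\pi_1^t(U \subset X)$ and $\pi_1^t(U^\nu) \ast \widehat{F_\delta}$.

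For the surjection $\pi_1^t(U^\nu) \ast \widehat{F_{\delta}} \twoheadrightarrow \pi_1^t(U \subset X)$, I would copy the derivation of \eqref{eq_one_way}, replacing the $2$-commutative diagram \eqref{diag_2_comm} by its tame analog with $\text{\rm FT}_{U \subset X}$, $\text{\rm FT}_{U_R \subset X_R}$, $\text{\rm FT}_{U_{\mathbb{C}} \subset X_{\mathbb{C}}}$ on the bottom row and the corresponding tame categories on $X^\nu$ on top. Any $G$-Galois cover in $\text{\rm FT}_{U \subset X}$ lifts to a tame complex cover via $\text{\rm sp}^t$, and the free product description writes $G$ as generated by subgroups $G^\nu$ and $H$ with $\pi_1^{p\text{\rm -tame}}(U^\nu_\mathbb{C}) \twoheadrightarrow G^\nu$ and $F_\delta \twoheadrightarrow H$; pulling back along $\nu$ then exhibits the pullback as a disjoint union of tame $G^\nu$-Galois covers in $\text{\rm FT}_{U^\nu \subset X^\nu}$, realizing $G^\nu$ as a continuous quotient of $\pi_1^t(U^\nu)$. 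For the reverse surjection $\pi_1^t(U \subset X) \twoheadrightarrow \pi_1^t(U^\nu) \ast \widehat{F_\delta}$, I would use the factorization \eqref{eq_factorization} $\nu \colon X^\nu = Y_t \to \cdots \to Y_0 = X$, where each step identifies two distinct points lying over a singular point of $X$ (and hence in $U$, disjoint from $B$). Starting from any finite quotient $G$ of $\pi_1^t(U^\nu) \ast \widehat{F_\delta}$, realized on $X^\nu$ by a tame cover ramified only above $\nu^{-1}(B)$, I would inductively apply Proposition~\ref{prop_main_1} or Proposition~\ref{prop_main_2} at each step, absorbing the $\widehat{F_\delta}$ factor and assembling a global $G$-Galois cover of $X$.

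The one subtlety, and the main obstacle, is the propagation of tameness through this inductive patching: one must check that no identification in the tower disturbs the ramification data over $B$. This is handled by Remark~\ref{rmk_preserving_inertia}, which guarantees that the constructions of Proposition~\ref{prop_main_1} and Proposition~\ref{prop_main_2} preserve inertia groups above smooth branch points, combined with the observation that every identification occurs at a point of $U_{\text{\rm sing}}$, disjoint from $B$. Hence the resulting $G$-Galois cover of $X$ is tamely ramified exactly over $B$ and lies in $\text{\rm FT}_{U \subset X}$, realizing $G$ as a continuous quotient of $\pi_1^t(U \subset X)$ and completing the identification.
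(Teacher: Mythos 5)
Your proposal follows essentially the same route as the paper: the paper establishes the surjective tame specialization map $\text{\rm sp}^t$, computes $\pi_1^{p\text{\rm -tame}}(U_{\mathbb{C}} \subset X_{\mathbb{C}}) = \pi_1^{p\text{\rm -tame}}(U^\nu_{\mathbb{C}}) \ast \widehat{F_{\delta}}$ topologically to get finite generation, obtains one surjection by the tame analogue of the diagram~\eqref{diag_2_comm} argument, and obtains the reverse surjection by repeating the inductive argument of Theorem~\ref{thm_main_projective} via Propositions~\ref{prop_main_1} and~\ref{prop_main_2}. Your explicit appeal to Remark~\ref{rmk_preserving_inertia} to control ramification over $B$ (which consists of smooth points disjoint from the identification loci) is precisely the point the paper leaves implicit, so the argument is correct and matches the paper's.
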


The same proof of Corollary~\ref{cor_nec}~\ref{item-1} then shows the following.

\begin{corollary}
Under the above notation, the tame specialization map $\text{\rm sp}^t$ in \eqref{eq_tame_sp} induces an isomorphism of the maximal $p'$-quotients.
\end{corollary}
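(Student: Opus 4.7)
The plan is to mimic the proof of Corollary~\ref{cor_nec}~\ref{item-1}, replacing \'{e}tale by tame throughout. First, Theorem~\ref{thm_tame} gives
\[
\pi_1^t(U \subset X) = \pi_1^t(U^\nu) \ast \widehat{F_\delta},
\]
and the topological argument immediately preceding Theorem~\ref{thm_tame} yields the analogous decomposition
\[
\pi_1^{p\text{\rm -tame}}(U_{\mathbb{C}} \subset X_{\mathbb{C}}) = \pi_1^{p\text{\rm -tame}}(U_{\mathbb{C}}^\nu) \ast \widehat{F_\delta}.
\]
I would next verify that the tame specialization $\text{\rm sp}^t$ of \eqref{eq_tame_sp} respects these free product decompositions. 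This should follow by tracing through the cover-building procedure: the equivalence $\text{\rm FT}_{U \subset X} \overset{\sim}\longrightarrow \text{\rm FT}_{U_R \subset X_R}$ is compatible with pullback along the normalization maps (the tame analogue of the $2$-commutative diagram used in Section~\ref{sec_Main_proj}), and the identification construction of Propositions~\ref{prop_main_1}, \ref{prop_main_2} together with Lemma~\ref{lem_cover_by_identification} lifts canonically between the characteristic $p$ and the characteristic zero settings. Hence $\text{\rm sp}^t$ restricts to the classical tame specialization $\pi_1^{p\text{\rm -tame}}(U_{\mathbb{C}}^\nu) \twoheadrightarrow \pi_1^t(U^\nu)$ on the normalization factor and acts as the identity on the $\widehat{F_\delta}$ factor.

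Then I would apply the general group-theoretic observation recorded at the start of the proof of Corollary~\ref{cor_nec}~\ref{item-1}: for a free product $\Gamma = \Gamma_1 \ast \Gamma_2$ of finitely generated profinite groups, the maximal $p'$-quotient satisfies $\Gamma^{(p')} \cong (\Gamma_1^{(p')} \ast \Gamma_2^{(p')})^{(p')}$. Applying this to both free products above, and using that the two copies of $\widehat{F_\delta}^{(p')}$ are identified via the identity, the claim reduces to the classical smooth-case tame specialization isomorphism
\[
\pi_1^{p\text{\rm -tame}}(U_{\mathbb{C}}^\nu)^{(p')} \overset{\sim}{\twoheadrightarrow} \pi_1^t(U^\nu)^{(p')},
\]
which is the tame analogue of \cite[Exp.~XIII, Corollary~2.12]{SGA1}.

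The main obstacle I foresee is the first step---namely, verifying compatibility of $\text{\rm sp}^t$ with the free product decompositions. Once that is in place, the remainder is formal and directly parallels Corollary~\ref{cor_nec}~\ref{item-1}. I expect this compatibility to drop out of a careful reading of Lemma~\ref{lem_cover_by_identification} combined with the equivalence of tame Galois categories over $X$ and $X_R$ discussed in the paragraph preceding~\eqref{eq_tame_sp}, since the construction explicitly separates the data into (i)~a cover of the normalization, which is unaffected by the identifications, and (ii)~a choice of bijections between points in the fibres being glued, which is precisely what the free $\widehat{F_\delta}$-factor records.
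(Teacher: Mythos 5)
Your proposal follows the paper's route: the paper's entire proof here is the single sentence that ``the same proof of Corollary~\ref{cor_nec}~(\ref{item-1}) shows the following,'' i.e., combine the free product decompositions on the two sides of \eqref{eq_tame_sp} with the group-theoretic observation that $(\Gamma_1\ast\Gamma_2)^{(p')}\cong\bigl(\Gamma_1^{(p')}\ast\Gamma_2^{(p')}\bigr)^{(p')}$ and the smooth-case specialization isomorphism. The one point where you diverge is the step you yourself flag as the main obstacle: verifying that $\mathrm{sp}^t$ is compatible with the two free product decompositions. That verification is not needed, and the paper does not perform it in the projective case either. It suffices to know (a) that $\mathrm{sp}^t$, and hence the induced map on maximal $p'$-quotients, is surjective, and (b) that the source and target of that induced map are abstractly isomorphic finitely generated profinite groups --- which follows from the two decompositions, your group-theoretic reduction, and the smooth-case isomorphism $\pi_1^{p\text{-tame}}(U^\nu_{\mathbb{C}})^{(p')}\cong\pi_1^t(U^\nu)^{(p')}$, with no compatibility statement required. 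A continuous surjection between abstractly isomorphic finitely generated profinite groups is automatically an isomorphism (such groups are Hopfian; compare the paper's appeal to \cite{FG} for $\pi_1$ being determined by its finite quotients), so (a) and (b) already give the claim. With that shortcut your argument closes cleanly; trying instead to show that $\mathrm{sp}^t$ literally restricts to the identity on the $\widehat{F_\delta}$ factor would be awkward, since that factor is not a canonical subgroup but depends on the choices of coset representatives and base points made in the identification construction.
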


\begin{remark}
We remark that anabelian type conjectures for seminormal $k$-curves can be posed; namely, what information do we get about the curve from its \'{e}tale fundamental group and subsequent quotients? The results from \cite{Tamagawa} for smooth curves then translates to seminormal singular curves once we know how the points in its normalizer are identified.
\end{remark}

To understand the Galois covers of $\mathbb{P}^1_k$, Abhyankar posed the Inertia Conjecture in \cite[Section~16]{Abh_01}. The status of the conjecture remain open at this moment (see \cite{survey_paper}, \cite{Das} and \cite{Das_2} for the progress towards this conjecture and for the generalizations). Since we now know the Galois groups for covers of singular curves, such questions can be posed for singular curves as well. In the following, we treat a particular case. We see that that whole set up for \cite[Section~6]{Das} comes into picture, even in this simple case.

Suppose that $Y = \mathbb{P}^1$, and let $X = Y/\left( 0 {\sim} 1 \right)$ be the seminormal curve obtained from $Y$ by identifying the points $0$ and $1$. By Theorem~\ref{thm_main_affine}, a finite group $G$ occurs as a Galois group for some Galois cover $Z \longrightarrow X$ that is \'{e}tale away from $\infty$ if and only if $G$ is an extension of the normal quasi $p$-subgroup $p(G)$ by a cyclic group. We want to understand the possible inertia groups that occur over $\infty$ for all such covers. We proceed as in \cite[Section~6]{Das}. By \cite[IV, Corollaries 3 and 4]{Serre_loc}, an inertia group $I$ is necessarily of the form $I = P \rtimes \mathbb{Z}/m$ for a $p$-group $P$ and $(m,p)=1$. Suppose that $f \colon Z \longrightarrow X$ be a $G$-Galois cover of connected $k$-curves that is \'{e}tale away from $\infty$. Let $I = P \rtimes \mathbb{Z}/m$ be an inertia group above $\infty$. Consider the (necessarily normal) subgroup $H_1 \coloneqq \langle \, P^G \rangle$, generated by the conjugates of $P$ in $G$. Then $f$ factors as a composition $Z \overset{h}\longrightarrow Z' = Z/H_1 \overset{g_1}\longrightarrow X$. Since $P \subset H_1$, the cover $g_1$ is at most tamely ramified over $\infty$, and \'{e}tale away from $\infty$. If the cover $g_1$ has ramification index $\geq 2$ over $\infty$, the same remains true for the normalized pullback of $g_1$ over $\mathbb{P}^1$. Since $\pi_1^t(\mathbb{A}^1)$ is trivial, this is not possible. So $g_1$ is \'{e}tale everywhere. In particular, $I \subset H_1$. Applying the Riemann Hurwitz theorem for \'{e}tale covers of Gorenstein curves, it also follows that $Z' \cong X$. Further, we can consider the normal subgroup $H_2 = \langle \, P^{H_1} \, \rangle$ of $H_1$ and factorize the cover $h$ as a composition of an $H_2$-Galois cover followed by an \'{e}tale $H/H_1$-Galois cover $X \longrightarrow X$. This way, we obtain a factorization of $f$ as a tower
$$ Z \longrightarrow X \overset{g_l}\longrightarrow X \overset{g_{l-1}}\longrightarrow \cdots \overset{g_1}\longrightarrow X$$
where $g_j$ is an $H_{j-1}/H_j$-Galois \'{e}tale cover, and $H_j$'s are inductively defined as follows.
$$H_0 = G \text{ \rm and } H_{j+1}= \langle \, P^{H_j} \, \rangle.$$
We ask whether the converse holds.

\begin{que}[{\cite[Question~6.2]{Das} for smooth case}]\label{que}
Let $X$, $G$, $I$ and $H_j$'s be as above.
\begin{enumerate}
\item (Stronger Version) Given a tower
$$g \colon X \overset{g_l}\longrightarrow X \overset{g_{l-1}}\longrightarrow \cdots \overset{g_1}\longrightarrow X$$
of $H_{j-1}/H_j$-Galois \'{e}tale covers, does there exit a $G$-Galois cover $Z \longrightarrow X$ dominating $g$ that is \'{e}tale away from $\infty$ and $I$ occurs as as an inertia group above $\infty$?
\item (Weaker Version) Suppose that $p(G)$ is generated by the conjugates of $P$ in $p(G)$. does there exit a $G$-Galois cover $Z \longrightarrow X$, \'{e}tale away from $\infty$, and $I$ occurs as as an inertia group above $\infty$?\label{weak}
\end{enumerate}
\end{que}

We end with a final remark.

\begin{remark}\label{rmk_last}
In the above notation, if there is a connected $p(G)$-Galois cover $f \colon W \longrightarrow \mathbb{P}^1$ that is \'{e}tale away from $\infty$ and $I$ occurs as an inertia group above $\infty$, the Weaker Version of Question~\ref{que}~\eqref{weak} has an affirmative answer. This is because $G$ is generated by $p(G)$ and an element $g \in G$ of prime-to-$p$ order (see \cite[Lemma~5.3]{Ha_AC}), and we can apply the formal patching result \cite[Corollary to Patching Theorem]{Ha_Patching} to the above cover $f$ and any $\langle g \rangle$ \'{e}tale connected cover of $X$ (as mentioned earlier, the proof of the loc. cit. holds in our situation of singular curves; the later $\langle g \rangle$-Galois cover exists by Theorem~\ref{thm_main_projective}). In particular, if the Inertia Conjecture holds for the group $p(G)$, the weaker question has an affirmative answer. This observation applies when $p(G)$ is an Alternating group of $d \geq p$ with $I = P$ a $p$-group (\cite[Theorem~1.7]{DK} and \cite[Theorem~1.4]{Das_2} for $p$ odd; \cite[Theorem~1.6]{Das_2} when $4 \nmid d$ and $p=2$).
\end{remark}

\bibliographystyle{amsplain}

\end{document}